\DeclareMathAlphabet{\pazocal}{OMS}{zplm}{m}{n}
\newtheorem{theorem}{Theorem}[section]
\newtheorem{teo}[theorem]{Theorem}
\newtheorem{lem}[theorem]{Lemma}
\newtheorem{prop}[theorem]{Proposition}
\newtheorem{coro}[theorem]{Corollary}
\theoremstyle{definition}
\newtheorem{definition}[theorem]{Definition}
\newtheorem{defi}[theorem]{Definition}
\newtheorem{ex}[theorem]{Example}
\newtheorem{rem}[theorem]{Remark}
\DeclareMathOperator{\Der}{Der}
\DeclareMathOperator{\Hom}{Hom}
\DeclareMathOperator{\Ker}{Ker}
\def\Im{\mathrm{Im\ }}
\DeclareMathOperator{\id}{id}
\DeclareMathOperator{\End}{End}
\newcommand{\ot}{\otimes}
\def\X{\mathfrak{X}}
\def\div{\mathrm{div}}
\def\aff{\mathfrak{aff}}
\def\Vol{\mathrm{Vol}}
\newcommand{\wt}{\widetilde}
\newcommand{\wh}{\widehat}
\def\E{\mathcal E}
\def\D{\mathcal D}
\def\Ext{\mathrm{Ext}}
\def\Hom{\mathrm{Hom}}
\def\sl{\mathfrak{sl}}
\def\g{\mathfrak g}
\def\h{\mathfrak h}
\def\C{\mathbb C}
\def\R{\mathbb R}
\def\Z{\mathbb Z}
\def\ad{\mathrm{ad}}
\def\om{\omega}
\def\To{\Rightarrow}
\def\Om{\Omega}
\def\beq{\begin{equation}}
\def\eeq{\end{equation}}
\def\dpi{\partial_\Pi}
\def\dce{\partial_{CE}}
\def\pf{\begin{proof}}
\def\epf{\end{proof}}
\def\dd{\partial_\delta}
\def\dce{\partial_{CE}}
\author{Marco A. Farinati
\thanks{Dpto de Matem\'atica FCEyN UBA - IMAS (Conicet). 
e-mail: mfarinat@dm.uba.ar.
} \ and
A. Patricia Jancsa
\thanks{Dpto de Matem\'atica FCEyN UBA. 
e-mail: pjancsa@dm.uba.ar.
Both authors are
partially supported by 
UBACyT and
PICT 2018-00858 ``Aspectos algebraicos y anal\'iticos de grupos
 cu\'anticos''.}
 }
\date{}
\title{BV - Algebras with Applications to Lie Bialgebras and 
Poisson Cohomology}
\begin{document}
\maketitle
{\em Keywords: BV-algebras, Gerstenhaber algebras, Poisson cohomology, Lie bialgebras}

\begin{abstract}
By using algebraic tools from differential Gerstenhaber 
algebras and Batalin - Vilkovisky algebras,
we provide  a new perspective  on the modular class
in Poisson geometry and  the intrinsic biderivation 
of a Lie  bialgebra. Furthermore,  applications in Lie bialgebra cohomology and Poisson cohomology are given.
\end{abstract}

\maketitle
 
\section*{Introduction}

 Gerstenhaber algebras derive their name after
  Gerstenhaber's
  work on  algebraic structure for Hochshild cohomology.
   However, examples of Gerstenhaber algebras were known
 for a long time: the full antisymmetric multivector fields on a manifold form a Gerstenhaber algebra with the Schouten - Nijenhuis bracket as ``Gerstenhaber'' bracket. Also, the
 complex computing Lie algebra homology is another example of Gerstenhaber algebra. In this paper we study Gerstenhaber algebras of this type, and the relation with being a BV - algebra.
 Again, the Chevalley-Eilenberg differential of the Chevalley-Eilenberg complex is an example of BV -  structure. In this 
 algebraic context, it is not new to consider also
 Poisson structures. However, what is new is the connection between the modular class of a Poisson manifold and the BV - formalism.
 
  The content of this work is as follows.
  In {\em Section 1} we recall
 the definitions of Gerstenhaber and BV-algebras, along with 
 the examples wich are relevant to this work.
 Also, we present a characterization of BV  - structures on Lie
 algebroids (or more generally, Lie - Rinehart algebras)
 in terms of divergence operators (see Proposition \ref{propdiv}).
 
 In {\em Section 2} we recall the  presentation of Poisson 
 structures using the algebraic formalism of Gerstenhaber algebras.
 We prove that a differential of a Gerstenhaber structure,
 that comes from a BV - algebra, may not automatically be compatible with the BV - structure; that is, the differential of a d.g. Gerstenhaber algebra does not necessarily anticommute with a BV - differential that generates the Gerstenhaber bracket. Next, we prove our main result, Theorem \ref{main},
 which proves that the aforementioned anticommutator is,
 in fact,  given by a derivation. Furthermore, we provide two significant applications: one for Poisson structures and another concerning Lie bialgebras.
 
 In {\em Section 3}, we show that, in the Poisson case,
 this derivation corresponds to the inner derivation 
  determined
  by the  modular class
 of the Poisson structure (see Theorem \ref{bider}). 
 Besides,
 after a discussion on unimodularity,
   we provide a new
 proof of the fact that Poisson cohomology is a BV - algebra when 
 the Poisson structure is unimodular
 (see Theorem \ref{unimod}).

 The application to Lie bialgebras is given in {\em Section 4}. We first show that this derivation is
  determined by the intrinsic (bi)derivation of the Lie bialgebra
  (see Theorem \ref{teobialg}).
  An explicit application is provided, exhibing how the previous
  algebraic machinery simplifies a cohomological calculus in Lie bialgebra cohomology (see Corollary \ref{corodiag}).

  Finally, in {\em Section 5}, we present the classical example of the Poisson
  manifold $\R^2$ with Poisson bracket $(x ^2+y ^2)\partial_x\wedge\partial_y$, viewed
  under the light of our results. We give an
  explicit calculation
  of its Poisson cohomology using a smaller complex, that
  gives  specific generators (see Theorem
  \ref{R2}).

\section{Batalin-Vilkovisky and Gerstenhaber Algebras}

\subsection{Gerstenhaber algebras}

We begin by recalling the definition of
Gerstenhaber algebras:

\begin{defi}
A Gerstenhaber algebra is a graded-commutative algebra
$A$ together with a bracket $[-,-]:A\times A\to A$
satisfying
\begin{itemize}
\item $A[1]$ is a super-Lie algebra, where $A[1]$ means
$A$ shifted by one as graded object.
\item $[a,bc]=[a,b]c+(-1)^{(|a|-1)|b|}b[a,c]$ where
$a$ and $b$ are homogeneous elements and
$|a|$ and $|b|$ means the (original) grading of $a$ and $b$
respectively.
\end{itemize}
\end{defi}

\subsection{Important examples of Gerstenhaber algebras}

\begin{enumerate}
\item For any associative algebra $A$ over a field $k$,
 its Hochschild
cohomology 
\[
HH ^\bullet(A)=H ^\bullet(A,A)
=\Ext_ {A^e}(A,A)
\]
is a Gerstenhaber algebra with cup product
(in particular the cup product is super-commutative)
and the bracket given by the antisimmetrization of the
Pre-Lie structure found by Gerstenhaber in the 
standar complex
\cite{G}.
 If $A$ is commutative
 regular, this structure recovers a well-known
geometric case that is the following example:

\item If $M$ is a $C^{\infty}$-manifold, then
\[
\X ^\bullet(M):=\Lambda_{C ^\infty(M) }^\bullet\X(M)
\]
 the algebra of
full-antisymmetric multivector fields is a Gerstenhaber algebra
 with exterior super commutative product 
and the Schouten-Nijenhuis bracket.
If $M=G$ is a Lie group, the $G$-invariant part is a
subobject that gives the following purely alegbraic example:

\item If $\g$ is a Lie algebra over a field $k$, then the exterior
 algebra
\[
\Lambda_k ^\bullet \g
\]
is a  Gerstenhaber algebra. The wedge product is
the supercommutative product, and the
Lie bracket is defined as the unique extension
  (by derivation of the wedge product) 
of the Lie bracket of $\g$. Another algebraic example that
generalizes both this example and the previous one is the following:

\item Assume a Lie-Rinehart algebra $(A,L)$ is given.
That is, a commutative algebra $A$ and a Lie algebra $L$
that is also an $A$-module, together with a Lie
 algebra map
(the so-called anchor map)
\[
\alpha:L\to \Der(A)
\]
\[
X\mapsto X_\alpha\]
satisfying
\[
[X,fY]_L=f[X,Y]_L+X_\alpha(f): f\in A, \ X,Y\in L
\]
If $A=C ^\infty(M)$ and $L$ is  proyective $A$-module then it
is also called a {\em Lie algebroid}. The first example
is when $A=C ^\infty(M)$ and $L=\Der(A)$ with identity
 as the anchor map. Another example is 
a ''Lie algebroid over a point'', where
 $A=k$ and  $L=\g$,  a $k$-Lie algebra with and zero anchor map.

In this situation, 
\[
\Lambda ^\bullet_AL
\]
is a Gerstenhaber algebra with the unique Gerstenhaber bracket satisfying
\[
\forall X,Y\in L: [X,Y]=[X,Y]_L
\]
\[
\forall f\in A, X\in L:
[X,f]=X_\alpha(f)
\]

\item Another example of Lie algebroid is  $\Om^1(M)$ 
where $M$ is a Poisson manifold, with anchor map
\[
\alpha:\Om^1(M)\to \X(M)
\]
\[fdg\mapsto f\{g,-\}\]
and Poisson bracket
determined by
\[
[df,dg]=d\{f,g\}
\]
\end{enumerate}

\begin{rem}
The tensor product of Gerstenhaber algebras is naturally a Gerstenhaber algebra. This is clear for the super-commutative
product using the sign rule
\[
(a  b)(a' b')=(-1)^{|b|\ |a'|}(aa')(  bb')
\]
but one can also define the Lie bracket
using the anti symmetry and the rule
\[
[x,ab]=[x,a]b+(-1)^{(|x|-1)|a|}a[x,b]
\]
\end{rem}

\subsection{Batalin-Vilkovisky algebras}

Batalin-Vilkovisky structures arose first in physics as a
 generalization of the BRST formalism for quantizing gauge
theories. However, from a mathematical point of view,
this notion is evidently related to Gerstenhaber algebras
and can be understood and applied to mathematical objects
 without any a-priori connection to  physical gauge theories.

\begin{defi} A Batalin-Vilkovisky algebra, or BV algebra 
for short, 
 is a $\Z$-graded,
supercommutative associative
algebra  together with an operator $\Delta$ of degree -1
satisfying
\begin{enumerate}
\item $\Delta ^2=0$
\item 
$
\Delta(abc)-\Delta(ab)c
+\Delta(a)bc-(-1)^{|a|}a\Delta(bc)
-(-1) ^{(|a|+1)|b|}b\Delta(ac)+$
\[
+(-1)^{|a|}a\Delta(b)c+(-1)^{|a|+|b|}ab\Delta(c)
-\Delta(1)abc=0\]
\end{enumerate}
\end{defi}

Sometimes it is required that $\Delta(1)=0$, but if
 the grading of the algebra is non-negative this is trivially
 satisfied.
 
A well-known fact about BV algebras 
is that the bracket defined
as the failure of $\Delta$ to be a derivation (or
more precisely, a differential operator of degree $\leq$1):
\[
[a,b]_\Delta:=(-1)^ {|a|}\Delta(ab)-(-1)^{|a|}\Delta(a)b-a\Delta(b)+a\Delta(1)b\]
This bracket turns $A$ into a Gerstenhaber algebra,
so BV-algebras are, in particular, Gerstenhaber algebras.
Gerstenhaber algebras admiting a BV operator $\Delta$
that recovers the bracket are sometimes called {\em exact}
Gerstenhaber algebras, and $\Delta$ is said to generate
the bracket.

\begin{rem}
If  $(A,\Delta_A)$ and $(B,\Delta_B)$ are
BV-\'algebras with $\Delta(1)=0$ then
$A\ot B$ is also a BV-algebra. The BV operator 
in $A\ot B$ is defined in the usual way:
\[
\Delta(a\ot b):=\Delta(a)\ot b+(-1) ^{|a|}a\ot \Delta(b)
\]
It clearly satisfies $\Delta^2=0$. but it is a remarkable fact that
it also satisfies the second  condition.
\end{rem}

\subsection{Our examples of interest of BV algebras}

 $\Lambda ^\bullet\g$ is in fact a BV-algebra with $\Delta=\dce$ the Chevalley-Eilenberg differential
\[
\Delta(x_1\wedge\cdots\wedge x_n)=\sum_{i<j}(-1) ^{i+j}
[x_i,x_j]\wedge x_1\wedge\cdots \wh{x_i}\cdots\wh{x_j}\cdots\wedge x_n
\] 
For instance, if $x_1,x_2\in \g\subset\Lambda ^\bullet\g$ then
\[
[x_1,x_2]_{\Delta}=(-1) ^{1}\Delta(x_1,x_2)=-(-1)^{1+2}[x_1,x_2]_\g=[x_1,x_2]_\g
\]
Since a Gerstenhaber bracket is determined by its values in a set of generators as associative algebra, the above checking 
for $x_1,x_2\in\g$ shows that the Gerstenhaber bracket 
in $\Lambda_k^\bullet\g$ agrees with the one generated by
 $\dce$.

Another classical example is
 $\X ^\bullet (M)$, with $M$ an
 orientable manifold. Pick $\om_0$ a volume form, this defines a divergence operator
 $\div:\X(M)\to C ^\infty(M)$ in terms of the Lie derivative of the volume form: for any vector field 
$X$, the Lie derivative of the volume form is necesarily a multiple
of the volume form, so one can define 
the divergence of a vector field by the following formula
\[
L_X(\om_0)=d~ \iota_X(\om_0)=:\div(X)\om_0
\]
Maybe it should be denoted $\div_0$ 
since it dependes on the choice of the volume form $\om_0$. 
\begin{rem}
\begin{itemize}
\item If $a$ is a nowhere vanishing function and 
$\om_1=a\om_0$, then
\[
L_X(a\om_0)
=X(a)\om_0+a\div(X)\om_0
\]
\[
=\frac1aX(a)a\om_0+\div(X)a\om_0
=\frac1aX(a)\om_1+\div(X)\om_1\]
\[
=X(\ln|a|)\om_1+\div(X)\om_1
\]
So, the divergence using another volume form agrees up to
the exact differential $d(\ln|a|)$.
\item
We remark that the orientability condition is not really 
necesary,
as we all know the local formula for the divergence in a 
Riemannian manifold.
\end{itemize}
\end{rem}

With this operator one can define a BV-operator on multivector
 fields. Additionally, one can make the same
 contruction for Lie - Rinehart algebras:

\begin{defi}\label{defdiv}
Let $(L,A)$ be a Lie - Rinehart algebra with anchor map $\alpha:L\to \Der(A)$. 
For simplicity, if $X\in L$ and $a\in A$,
 denote $X(a)$ instead of $X_\alpha(a)$
or $\big(\alpha(X)\big)(a)$.
A {\bf divergence}
 operator
is an additive map
\[
\div:L\to A\]
satisfying the following two conditions
\[
\div[X,Y]=X(\div Y)-Y(\div X)
\] 
\[
\div(aX)=a\div X+X(a)
\]
\end{defi}
In \cite{Hue} an equivalent notion is described in terms 
of (algebraic) right connections, and it is
 proven that if $L$ is proyective
of constant rank $n$, then one can give a right connection 
in $A$ from a one in 
$\Lambda_A ^nL$ \cite[Theorem 3]{Hue}. 
In the geometric setting this was done before by Ping Xu
(see \cite{X}), in our notation, if $X\in L$ and 
$\om\in \Lambda ^n_AL$ (assuming $L$ is projective of
 constant rank $n$ so that 
 $\End_A(\Lambda ^n_AL)= A\cdot\id$,
 $\nabla$ is a flat $A$-connection, and denoting
  $L_X=[X,-]$, 
 then Xu's formula 
 \cite[Proposition 3.11]{X} is given by
\[
L_X(\om)-\nabla_X\om=(\div_\nabla)\om
\]
Heubschmann's notation is $\div(X)=1\circ X$.
In terms of an abstract divergence operator we have:

\begin{prop}\cite[Theorem 1]{Hue}, 
\cite[Proposition 3.1]{X} If $(A,L)$ is a Lie Rinehart pair and
 $\div:L\to A$ is divergence operator, then the following is a
BV-operator generating the Gerstenhaber bracket on
$\Lambda_A ^\bullet L$:
\[
\Delta(X_1\wedge\cdots\wedge X_n)=\sum_{i<j}(-1) ^{i+j}
[X_i,X_j]\wedge X_1\wedge\cdots \wh{X_i}\cdots\wh{X_j}\cdots\wedge X_n
\]
\[
+\sum_{i=1}^ {n}
(-1)^{i}\div(X_i)
 X_1\wedge\cdots \wh{X_i}\cdots\wedge X_n
\] 
\end{prop}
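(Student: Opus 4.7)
The plan is to verify, in order, four things: (i) that the formula for $\Delta$ descends to a well-defined operator on $\Lambda_A^\bullet L$; (ii) that $\Delta^2=0$; (iii) that the bracket $[-,-]_\Delta$ generated by $\Delta$ agrees with the Lie--Rinehart Gerstenhaber bracket on the set of generators $L\cup A$, hence everywhere; and (iv) that the seven-term BV identity holds. I expect (ii) to be the main technical step.

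For (i), write $\Delta = \partial + \iota_\div$, where $\partial$ is the first sum (a Chevalley--Eilenberg-type term) and $\iota_\div$ is the second, essentially the contraction with the $k$-linear map $\div:L\to A$ extended as an odd derivation of degree $-1$. Neither summand is $A$-linear on its own: under $X\mapsto aX$ the operator $\partial$ picks up a term coming from the Leibniz rule $[X,aY]_L=a[X,Y]_L+X(a)Y$, while $\iota_\div$ picks up a term coming from $\div(aX)=a\div X+X(a)$. The whole point of the definition is that these two defects cancel, so $\Delta$ descends to $\Lambda_A^\bullet L$. This is a short bookkeeping check.

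For (ii), I would compute $\Delta^2 = \partial^2 + (\partial\iota_\div + \iota_\div\partial) + \iota_\div^2$. The term $\iota_\div^2$ vanishes for degree reasons (it is the square of an odd derivation whose restriction to $L$ is valued in $A$, hence it is zero on $L$ and by the derivation property zero everywhere). The term $\partial^2$ vanishes on a decomposable $X_1\wedge\cdots\wedge X_n$ by the same combinatorial argument as in the usual Chevalley--Eilenberg proof: the quadruple sums rearrange into an expression that is zero by the Jacobi identity in $L$. The mixed anticommutator $\{\partial,\iota_\div\}$ is where the divergence axiom $\div[X,Y]=X(\div Y)-Y(\div X)$ enters: expanding, one finds a sum of terms of three types, those in which $\iota_\div$ contracts one of the surviving $X_k$ (these cancel between $\partial\iota_\div$ and $\iota_\div\partial$ as in the purely CE case), and those in which $\iota_\div$ contracts one of the two entries forming a bracket $[X_i,X_j]$; the latter produce exactly the combination $\div[X_i,X_j]-X_i(\div X_j)+X_j(\div X_i)$, which vanishes by the first axiom of $\div$. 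This is the step where a careful sign accounting is required and where the proof could go wrong; I would do it on decomposable elements and use the well-definedness from (i) to conclude.

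For (iii), because the Gerstenhaber bracket in a Lie--Rinehart algebra is by construction the unique biderivation extending $[X,Y]_L$ on pairs in $L$ and $X(f)$ on pairs from $L\times A$, it suffices to evaluate $[-,-]_\Delta$ on such pairs. A direct calculation gives $[X,Y]_\Delta = [X,Y]_L$, $[X,f]_\Delta = X(f)$ and $[f,g]_\Delta = 0$, since the divergence contributions drop out for degree reasons (any term where a $\div$ factor appears is in degree $0$ and cancels against $\Delta(1)=0$). Finally, for (iv), once $[-,-]_\Delta$ is known to coincide with the Gerstenhaber bracket, the seven-term identity is equivalent to the statement that $[-,-]_\Delta$ is a biderivation of the wedge product, which is automatic; so no additional work is needed beyond (i)--(iii).
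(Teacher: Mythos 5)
Note first that the paper offers no proof of this proposition (it is quoted from Huebschmann and Xu; the paper only proves the converse direction in Proposition \ref{propdiv}), so your proposal must stand on its own. Its skeleton — well-definedness, $\Delta^2=0$, identification of the generated bracket on $A\cup L$, seven-term identity — is reasonable, but the two decisive steps have genuine gaps. In (ii), the claim that $\iota_{\div}^2=0$ ``for degree reasons'' is false: since $\div$ is not $A$-linear, the second sum is not an odd derivation of $\Lambda_A^\bullet L$ (as you yourself note in (i), it is not even well defined there on its own), so the argument ``an even derivation vanishing on generators vanishes'' does not apply. Computing on representatives in degree two gives $\iota_{\div}^2(X\wedge Y)=Y(\div X)-X(\div Y)$, which is nonzero in general, while $\partial\,\iota_{\div}(X\wedge Y)=0$; hence the first axiom does not act inside the mixed anticommutator as you describe, but cancels the $\iota_{\div}\partial$-term $\div[X,Y]$ against precisely this $\iota_{\div}^2$-contribution (the terms $X_j(\div X_i)$ arise from the second axiom applied to the element $\div(X_i)X_j\in L$, not from contracting an entry of a bracket). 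A corrected bookkeeping along these lines does close the argument, but as written the central cancellation is attributed to the wrong terms, which is exactly the step you flagged as critical.

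In (iii)--(iv) there is a circularity: to pass from agreement of $[-,-]_\Delta$ with the Lie--Rinehart bracket on generators to agreement everywhere, you must already know that $[-,-]_\Delta$ is a biderivation of the wedge product, and that is precisely the content of the seven-term identity you then declare ``automatic'' in (iv). The identity (equivalently, the statement that $\Delta$ is a differential operator of order $\le 2$) must be verified directly, e.g.\ by computing $[a,-]_\Delta$ on decomposables and checking the Leibniz rule; that computation is the actual content of Huebschmann's Theorem 1 and Xu's Proposition 3.1 and cannot be obtained for free from the check on generators. Finally, in (iii) the divergence contributions do not ``drop out for degree reasons'': one finds $[X,f]_\Delta=\div(fX)-f\,\div(X)$, and it is the axiom $\div(fX)=f\div(X)+X(f)$ that turns this into $X(f)$; this is exactly where the anchor enters, so the justification there also needs to be repaired even though the asserted values of the bracket on generators are correct.
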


We remark that the following reciprocal statement is also true:

\begin{prop}\label{propdiv}
For a Lie Rinehart algebra $(A,L)$ the following data
 are equivalent:
\begin{enumerate}
\item A BV-differential generating the Gerstenhaber bracket
on $\Lambda_A ^\bullet L$.
\item A divergence operator $\div:L\to A$ satisfying the
conditions of Definition \ref{defdiv}.
\end{enumerate}
\end{prop}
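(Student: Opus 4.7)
The plan is to establish the direction $(1) \Rightarrow (2)$, since $(2) \Rightarrow (1)$ is given by the preceding proposition; the bijectivity of the correspondence will then come essentially for free. Given a BV-differential $\Delta$ generating the Gerstenhaber bracket on $\Lambda^\bullet_A L$, the natural candidate, motivated by the $n=1$ case of the formula above, is
\[
\div : L \to A, \qquad \div(X) := -\Delta(X).
\]
Since $\Delta$ has degree $-1$ and $L$ lives in degree $1$, this map is well-defined and additive; for the same reason $\Delta(a) = 0$ for every $a \in A$, in particular $\Delta(1)=0$, which will be used throughout.

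First I would verify the axiom $\div(aX) = a\div(X) + X(a)$ by expanding $[a,X]_\Delta$ via the BV-bracket formula. With $|a|=0$ and $\Delta(a) = \Delta(1) = 0$, the formula collapses to $[a,X]_\Delta = \Delta(aX) - a\Delta(X)$. On the other hand, the Gerstenhaber bracket on $\Lambda^\bullet_A L$ satisfies $[X,a] = X(a)$ by item 4 of the list of examples, whence $[a,X] = -X(a)$ by graded antisymmetry of the shifted bracket; equating the two expressions yields the desired identity.

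Next, for the axiom $\div[X,Y] = X(\div Y) - Y(\div X)$, I would first compute
\[
\Delta(X \wedge Y) = -[X,Y]_L - \div(X) Y + \div(Y) X
\]
from the BV-bracket formula applied to $[X,Y]_\Delta = [X,Y]_L$, and then apply $\Delta$ a second time, using the first axiom (already established) to evaluate terms such as $\Delta(\div(X) Y)$. Cross-terms of the form $\div(X)\div(Y)$ cancel by commutativity of $A$, and imposing $\Delta^2(X \wedge Y) = 0$ produces exactly the required relation.

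Finally, for bijectivity: plugging a divergence $\div$ into the formula of the preceding proposition and restricting to $L$ recovers $-\div(X) = \Delta(X)$, so one direction is immediate. For the converse, I would argue that $\Delta$ is determined by its values on $L$ together with the (fixed) Gerstenhaber bracket by induction on wedge length; rearranging the BV-bracket identity gives
\[
\Delta(X_1 \wedge \cdots \wedge X_n) = -[X_1, X_2 \wedge \cdots \wedge X_n]_\Delta + \Delta(X_1)(X_2 \wedge \cdots \wedge X_n) - X_1\,\Delta(X_2 \wedge \cdots \wedge X_n),
\]
reducing $\Delta$ on an $n$-fold wedge to known data on shorter wedges, so $\Delta$ must coincide with the operator built from $\div$ via the preceding proposition. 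The main obstacle I anticipate is careful sign bookkeeping in the two BV-identity expansions; everything else becomes routine once $\Delta$ is known to annihilate $A$.
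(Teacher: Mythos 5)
Your proposal is correct and follows essentially the same route as the paper: define $\div:=-\Delta|_L$, extract the Leibniz axiom from the bracket between $A$ and $L$, and obtain the compatibility with $[-,-]_L$ by applying $\Delta$ to $\Delta(X\wedge Y)$ and using $\Delta^2=0$. Your closing remark that $\Delta$ is recursively determined by $\div$ and the bracket is a harmless addition the paper leaves implicit.
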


\begin{proof}
$2.\To 1.$ This are Huebschmann's computations. 

\

$1\To 2$. If $\Delta$ is a BV operator on $\Lambda_A ^\bullet L$
\[
\cdots\to \Lambda ^2_AL\overset{\Delta}{\longrightarrow}
\Lambda ^2_AL\overset{\Delta}{\longrightarrow}
L\overset{\Delta}{\longrightarrow}A\to 0
\]
 then $\Delta(A)=0$ just by degree reasons, and 
$\Delta(L)\subseteq A$. Define
\[
\div:=-\Delta|_L:L\to A
\]
Using the fact  that $\Delta$ generates the Gerstenhaber bracket,
we have in particular that, for $a\in A$ and $X\in L$:
\[
X(a)=
[X,a]
=(-1)^ {|X|}\Delta(Xa)-(-1)^{|X|}\Delta(X)a-X\Delta(a)\]
\[
=+\div(aX)-a\div(X)\]
or equivalently, $\div(aX)=a\div(X)+X(a)$.

Also, for $X,Y$ in $L$, the general formula
\[
[X,Y]_\Delta=(-1)^ {|X|}\Delta(X\wedge Y)-(-1)^{|X|}\Delta(X)\wedge Y-X\wedge \Delta(Y)\]
gives
\[
[X,Y]=-\Delta(X\wedge Y)-\div(X) Y+\div(Y)X\]
Applying $\Delta$ we get
\[
-\div([X,Y])=\Delta([X,Y])
=-\Delta ^2(X\wedge Y)-\Delta(\div(X) Y)+\Delta(\div(Y)X)\]
since $\Delta^2=0$ and $\Delta|_L=-\div$,
\[
=\div(\div(X) Y)-\div(\div(Y)X)\]
\[
=\div(X) \div(Y)
+Y(\div X)
-\div(Y)\div(X)
-X(\div Y)
\]
\[
=
+Y(\div X)
-X(\div Y)
\]
That is, $\div[X,Y]=X(\div Y)-Y\div(X)$.
\end{proof}

\begin{rem}
One can say that the formulas
$\div[X,Y]=X\div(Y)-Y\div(X)$ and
$\div(aX)=a\div(X)+X(a)$ are necessary for $\Delta ^2$
to be zero, but we have just seen that they are suficient.
\end{rem}

\begin{ex}
If $M$ is a Poisson Manifold, $A=C ^\infty(M)$ and
$L=\Om^1(M)$, then one can define the map
\[
\Om ^1(M)\to C^\infty(M)\]
\[
fdg\mapsto \{f,g\}
\]
One can easily check that it satisfies the conditions of Definition
\ref{defdiv}, so it is a divergence operator. The corresponding
BV operator agrees  with Brilinsky's differential $d_{\{,\}}$
\cite{Br}. 
\end{ex}

\begin{rem}
Usually, BV algebras are though of as alternatives to algebras
 where some integration is possible, or are related to 
 the existence
 of a volume form or a dualizing bimodule 
(or complex) such as $\Om ^{\dim M}(M)$ or 
$\Ext_{A^e} ^\bullet(A,A ^e)$.
 We also  mention that  
T. Tradler \cite{TT} shows a relation between being
a Frobenius algebra and having a BV structure on Hochschild
 cohomology. Huesbschann shows how to produce a 
right $L$-connection on $A$ from a connection on 
$\Lambda ^n_AL$.
We mention in this direction 
the notion of a calculus with duality \cite{La} and
the compatibilities of
 BV-structures found in the context of 
Koszul Calabi-Yau algebras \cite{CYZ}.
Also, at very basic level,  
 the divergence operator is ussually presented 
as measuring whether  a vector field preserves  the volume
or not.
However, we would like to emphasis the simple and elementary
abstract conditions of Definition \ref{defdiv} that 
produce BV-structures  in situations where 
no volume form, nor dualizing complex is given, or it is hard to 
fomalize in a satisfactory way, but the 
divergence operator may be obvious, as the following
algebraic example shows.
\end{rem}

\begin{ex}
Let $A=k[\{x_i\}:i\in I]$ 
be a polynomial ring in infinitely many variables and
$L=\bigoplus_{i\in I}A\partial_i$ be 
the obvious sub-Lie algebra
of $\Der_k(A)$. The inclusion $L\to \Der(A)$ defines
the anchor map
of the Lie Rinehart
pair $(A,L)$. It is clear that for $I$ infinite,  there is no
volume form, nor dualizing complex. However, 
for $X=\sum\limits_{i\in I} p_i\partial_i\in\bigoplus\limits_i A\partial_i$,
the operator defined by
\[
\div(X)=\div\big(\sum_i p_i\partial_i\big):=
\sum_i \frac{\partial p_i}{\partial x_i}
\]
satisfies the conditions of being a divergence operator. Hence,
we have a BV operator $\Delta$ generating the Gerstenhaber algebra structure of $\Lambda_A^\bullet L$.
\end{ex}

\begin{rem}\label{divtilde}
If $\div:L\to A$ is a divergence operator and  $a_0\in A$ then
\[
\wt \div(X):=\div(X)+X(a_0)
\]
is also a divergence operator.
The corresponding BV operator is
\[
\wt\Delta=\Delta-[a,-]
\]
\end{rem}
\begin{proof}
Straightforward.
\end{proof}

\begin{rem}
Keeping the above notation, one may check that the cohomology with respect to $\Delta$ is
isomorphic to the cohomology with respect to $\wt\Delta$.
However,
 in general,  the same Gerstenhaber algebra 
may admit several BV-structures that are non-equivalent
in the sense that, for example,
their cohomology is different.  The minimal example
is the following:
\end{rem}
 
 \begin{ex}
 Consider $A:=k[x]/(x ^2)=k\oplus kx$ with the grading $|x|=1$.
One can trivially check that the  trivial Gerstenhaber bracket
is the only possible one. However, for the BV-structure, for any
fixed $\lambda\in k$ one may define
\[
\Delta_\lambda(x):=\lambda,\ \ \Delta_\lambda(1)=0
 \]
One can easily see that $\Delta_\lambda$ satisfies 
the BV-conditions and generate the (only possible)
 Gerstenhaber structure.
The cohomology with respect to $\Delta_\lambda$ 
clearly depends on $\lambda$:
 \[
 \begin{array}{ccccccc}
\lambda\neq 0& \To &   
\Delta_\lambda\colon kx\overset{\cong}{\longrightarrow   }k
&\To& 
H_\bullet(k[x]/(x^2),\Delta_\lambda)&=&0\\
\lambda= 0& \To &   
\Delta_\lambda\colon kx\overset{0}{\longrightarrow   }k
&\To&
 H_\bullet(k[x]/(x^2),\Delta_0)&=&k[x]/(x^2)
\end{array}
\]
\end{ex}

\section{(Maybe non) differential BV algebras
and the main result}

\subsection{Poisson structures}

A Poisson structure on a manifold $M$ is a Lie bracket
$\{-,-\}$ on the algebra of smooth functions that is a
 derivation on each variable. This data can be given 
by a 2-vector
\[
\Pi\in\X^ 2(M)
\]
fully determined by the formula
\[
\Pi(df,dg)=\{f,g\}
\]
It is a well-known fact that $\{-,-\}$ verifies Jacobi
 if and only if
the 2-vector $\Pi$ satisfies
\[
[\Pi,\Pi]_{SN}=0\in\X ^3(M)
\]
where $[-,-]_{SN}$ is the Schouten-Nijenhuis bracket.
In \cite{L} Lichnerowich
introduced the Poisson cohomology by considering the complex
\[
(\X ^\bullet(M),\dpi)
\]
where $\dpi$ is given by
\[
\dpi(X_1\wedge\cdots\wedge X_k)=
[\Pi,X_1\wedge\cdots\wedge X_k]
\]
In general, if $\Lambda ^\bullet$
 is a Gerstenhaber algebra
 (e.g. $\Lambda_A ^\bullet L$ for some Lie-Rinehart 
 pair) and $\Pi\in\Lambda ^2$ 
is an element satisfying 
\[
[\Pi,\Pi]=0\]
 then the (super)Jacobi identity for the Gerstenhaber bracket
 gives
\[
[\Pi,[\Pi,-]]=2[\Pi,-]=0
\]
and one may consider the complex
\[
(\Lambda ^\bullet,\dpi=[\Pi,-])
\]
It is clear that $\dpi$ is a (super)derivation for both 
 the associative product and the Gerstenhaber bracket,
so the cohomology with 
respect to $\dpi$ is also a Gerstenhaber algebra. 
Not every derivation of a Gerstenhaber algebra is necesarily 
of this type, so we recall a definition:

\subsection{Lie Bialgebroids and strong differential Gerstenhaber algebras}

\begin{defi}\label{Gdif}
If $\Lambda ^\bullet$ is a Gerstenhaber algebra, a 
linear homogeneous operator  
$\partial:\Lambda^\bullet
\to \Lambda ^{\bullet+1}$ is called a
{\bf strong differential}
 for the 
Gerstenhaber structure if $\partial^2=0$ and
it is a (super)derivation with respect to both the associative 
product and the bracket:
\[
\partial(ab)=\partial(a)b+(-1)^{|a|}a\partial(b)
\]
\[
\partial[a,b]=[\partial a,b]+(-1)^{|a|-1}[a,\partial b]
\]
for all $a,b$ homogeneous elements in $\Lambda ^\bullet$.
\end{defi}

Recall the definition of a Lie bialgebra:
\begin{defi}
A Lie bialgebra over $k$ is a $k$-Lie algebra $\g$ together with a map $\delta:\g\to\Lambda ^2\g$ called the co-bracket satisfying
the following conditions:
\begin{itemize}
\item $\delta ^*:\Lambda ^2\g ^*\to \g^*$ is a Lie bracket (co-Jacobi condition).

\item $\delta[x,y]=\ad_x(\delta y)-\ad_y(\delta x)$
(1-cocycle condition).

\end{itemize}
\end{defi}

\begin{rem}
A Lie algebroid is  Lie Rinehart pair $(A,L)$ 
of the form $A=C ^\infty(M)$ and $L$ being 
$A$-projective and finitely generated. 
A Lie {\em bi}algebroid consists of a Lie algebroid 
$(A,L)$ together with a Lie algebroid structure on
 $L ^*:=\Hom_A(L,A)$ and some 
compatibility condition that we shall not make explicit since
(see \cite{KS}) all this 
 is equivalent 
to giving a strong differential
 $\partial:\Lambda^\bullet _AL\to \Lambda ^{\bullet+1}_AL$
as in Definition \ref{Gdif}.
Notice that a Lie {\em bi}algebroid over a point
is the  same as a Lie {\em bi}algebra. More precisely, if $A=k$
is a field and  $L=\g$ is a Lie algebra over $k$ 
(viewed as a Lie Rinehart pair with zero
anchor map), then a differential 
$\partial:
\Lambda ^\bullet\g\to \Lambda ^{\bullet+1}\g$ 
is fully determined by its restriction 
\[
\delta:=\partial|_\g:\g\to\Lambda ^2\g
\]
 The square zero condition for $\partial$ is equivalent to 
 co-Jacobi condition
 for $\delta$, and being a derivation with respect to the
  Gerstenhaber bracket is equivalent to the 1-cocycle condition.
\end{rem}

\subsection{Main result}

In the case where
a Gerstenhaber algebra 
$\Lambda^\bullet$ 
has a strong differential 
$\partial:\Lambda ^\bullet\to\Lambda^{\bullet+1}$
(e.g.
$\partial=\dpi=[\Pi,-]$ in the Poisson case) and, in addition,
 the Gerstenhaber
 structure arises from a BV operator $\Delta$, one may
  wonder about
the compatibility between $\partial$ and $\Delta$.
In general $\Delta$ does not necesarily to anti-commute with
$\partial$, however, the anticommutator has the 
following general property:

\begin{teo}\label{main} Let $\Lambda ^\bullet$
be a Gerstenhaber algebra admiting a 
BV-operator $\Delta$ that generates its bracket. 
We  assume $\Delta(1)=0$.
If
$\partial$ is a strong differential
for the Gerstenhaber structure i.e.,
$\partial:\Lambda^\bullet\to
\Lambda ^{\bullet+1}$ is a square-zero
derivation of the product and the bracket,
 then
the (homogeneous of degree zero) map
\[
\partial\circ\Delta+\Delta\circ\partial
\]
is a  derivation of $\Lambda ^\bullet$
with respect to the product (and the bracket).
\end{teo}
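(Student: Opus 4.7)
The map $D := \partial \circ \Delta + \Delta \circ \partial$ is homogeneous of degree zero, since $\Delta$ has degree $-1$ and $\partial$ has degree $+1$. Being a derivation of the supercommutative product therefore means $D(ab) = D(a)\,b + a\,D(b)$ with no sign, and similarly for the bracket. My plan is to verify both identities by direct expansion, using the ``master'' BV identity (equivalent to the statement that $\Delta$ generates the bracket when $\Delta(1)=0$)
\[
\Delta(ab) = \Delta(a)\,b + (-1)^{|a|} a\,\Delta(b) + (-1)^{|a|}[a,b],
\]
together with the two Leibniz rules for $\partial$ from Definition \ref{Gdif}.

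For the product identity: apply $\partial$ to the master identity, and separately apply $\Delta$ to the Leibniz expansion $\partial(ab) = \partial(a)\,b + (-1)^{|a|}a\,\partial(b)$ (using the master identity again on each of the two resulting pieces). Summing the two expressions, the ``diagonal'' terms $\partial\Delta(a)\,b + \Delta\partial(a)\,b$ and $a\,\partial\Delta(b) + a\,\Delta\partial(b)$ assemble into $D(a)\,b + a\,D(b)$. The ``crossed'' terms of the form $\Delta(a)\,\partial(b)$ and $\partial(a)\,\Delta(b)$ each appear twice with opposite signs (the discrepancy being exactly the shifts $|\partial a| = |a|+1$ and $|\Delta a| = |a|-1$) and therefore cancel. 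Finally, the bracket terms $(-1)^{|a|}\partial[a,b]$ produced by $\partial$ acting on the master identity, and the bracket pieces produced by $\Delta$ acting on $\partial(a)\,b$ and $a\,\partial(b)$, recombine via $\partial[a,b] = [\partial a, b] + (-1)^{|a|-1}[a, \partial b]$ and also cancel.

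For the bracket identity: here I would first recall the standard consequence of the BV axioms that $\Delta$ is itself a super-derivation of degree $-1$ of the Gerstenhaber bracket,
\[
\Delta[a,b] = [\Delta a, b] + (-1)^{|a|+1}[a, \Delta b],
\]
obtained by applying $\Delta$ to the master identity and using $\Delta^2 = 0$. Combined with the assumed derivation rule for $\partial$ on the bracket, the verification of $D[a,b] = [Da, b] + [a, Db]$ becomes structurally identical to the product case: expand $\partial\Delta[a,b]$ and $\Delta\partial[a,b]$, collect the diagonal contributions into $[Da, b] + [a, Db]$, and observe that the cross terms $[\Delta a, \partial b]$ and $[\partial a, \Delta b]$ again appear with opposite signs and cancel.

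The only genuine obstacle is sign bookkeeping. Conceptually the theorem says that the graded commutator $\partial\Delta + \Delta\partial$ of a strong differential (an order-one differential operator that in addition derives the bracket) with a BV operator (an order-two differential operator in the Koszul--Akman sense) is again of order one, i.e.\ a derivation; the \emph{strong} hypothesis on $\partial$ is precisely what forces the order to drop from the generic $\leq 2$ to $\leq 1$. In practice, keeping the shifted degrees straight and substituting the master identity consistently is the whole of the computation.
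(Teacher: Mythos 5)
Your proposal is correct, but it takes a more computational route than the paper. The paper encodes the same three inputs --- the BV identity, the Leibniz rule for $\partial$, and the bracket-derivation property of $\partial$ --- as the relations $[[\Delta,m]]=G$, $[[\partial,m]]=0$, $[[G,\partial]]=0$ inside the super Lie algebra $\End(\Lambda^\bullet\otimes\Lambda^\bullet\oplus\Lambda^\bullet)$, and then obtains $[[\,[[\Delta,\partial]],m]]=0$, i.e.\ the product-Leibniz rule for $\partial\Delta+\Delta\partial$, in one stroke from the Jacobi identity; the only explicit sign work there is unwinding $[[G,\partial]]=0$ into the cocycle condition $\partial[a,b]=[\partial a,b]+(-1)^{|a|-1}[a,\partial b]$. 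You instead expand $\partial\Delta(ab)+\Delta\partial(ab)$ directly from the master identity $\Delta(ab)=\Delta(a)b+(-1)^{|a|}a\Delta(b)+(-1)^{|a|}[a,b]$ and the Leibniz rule, and your claimed cancellation pattern does check out: the cross terms $\Delta(a)\partial(b)$ and $\partial(a)\Delta(b)$ occur with coefficients $(-1)^{|a|-1}+(-1)^{|a|}=0$ and $(-1)^{|a|}+(-1)^{|a|+1}=0$ respectively, and the residual bracket terms $(-1)^{|a|}\partial[a,b]+(-1)^{|a|+1}[\partial a,b]+[a,\partial b]$ vanish exactly by the strong-differential hypothesis, so that hypothesis enters where you say it does. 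For the bracket identity your argument coincides with the paper's: both use that $\Delta$ itself derives the bracket (which, as you note, follows from $\Delta^2=0$ and the master identity alone, without the order-two axiom) and that a super-commutator of bracket-derivations is again one. What the paper's formulation buys is sign-free bookkeeping and the conceptual statement that commuting an order-two generator of the bracket with a strong differential drops the order to one; what your direct expansion buys is an elementary, self-contained verification that also makes visible exactly which terms would obstruct the conclusion if $\partial$ derived the product but not the bracket.
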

\begin{proof}
The fact that it is a derivation with respect to the bracket is
 clear, because both $\partial$ and $\Delta$ are so, and a
  (super)commutator
of derivations is a derivation. The key point in this result
 is that $\partial$
is a derivation with respect to the associative product but 
$\Delta$
is not, it is a second-order differential operator, so, 
for free,
this commutator will be just  a second-order differential
 operator.
We well see that it is actually a derivation.

Let us consider the space of linear endomorphisms
\[
\E:=\End(\Lambda ^\bullet\!\ot\! \Lambda ^\bullet
~\oplus~\Lambda ^\bullet)
\]
 Since
$\Lambda ^\bullet\!\ot\! \Lambda ^\bullet
~\oplus~\Lambda ^\bullet$ is a super vector space,
 $\E$ is naturally
a super Lie algebra with super commutator, let us denote it by $[[-,-]]$.

Recall that the failure of  $\Delta$ being  a derivation
is precisely  the Gerstenhaber bracket.
Let us extend $\Delta$ to an operator defined
on  $\Lambda ^\bullet\ot \Lambda ^\bullet$
in the usual way (and by abuse of notation,
denote it by the same name)
\[
\Delta(u\ot v):=
\Delta(u)\ot v+(-1) ^{|u|}u\ot \Delta(v)
\]
where $u, v\in A$, and $u$ is homogeneous.
In this way we consider $\Delta\in\E$. Also, consider the multiplication map
\[
m:\Lambda ^\bullet\!\ot\! \Lambda ^\bullet\to \Lambda ^\bullet
\]
\[
a\ot b\mapsto ab
\]
as an element in $\E$ by declaring
\[
m(a\ot b,c):=(0,ab)\in\Lambda ^\bullet\!\ot\! \Lambda ^\bullet\oplus \Lambda ^\bullet\]
and similarly we consider map $G$ induced by
the Gerstenhaber bracket  defined by
\[
G(a\ot b,c):=(0,(-1) ^{|a|}[a,b])
\]
Now, the BV-formula 
\[
(-1)^{|a|}[a,b]=\Delta(ab)-\Delta(a)b-
(-1)^{|a|}a\Delta(b)
\]
reads
\[
[[\Delta,m]]= G
\]
It is worth noting that stating that a degree zero map
 $D:A\to A$
is a derivation (with respect to the  product)
is precisely the same as asserting  that 
the induced element (denoted by the same letter) in $\E$,
 defined by
\[
D(u\ot v,w):=(D(u)\ot v+u\ot D(v), D(w))
\]
satisfies
\[
[[D,m]]=0
\]
Now, let us compute the bracket and apply the Jacobi identity:
\[
[[~[[\Delta,\partial]],m]]=
\pm [[~[[\Delta,m]],\partial]]
\pm
[[~[[\partial,m]],\Delta]]
\]
Using that $[[\Delta,m]]= G$ 
and that $\partial$ is a derivation with respect to the 
product, we have $[[\partial,m]]=0$, which leads to
\[
[[~[[\Delta,\partial]],m]]=
\pm [[G,\partial]]
\pm
[[ 0,\Delta]]
\]
But also $\partial$ is a derivation with respect to the Gerstenhaber bracket,
we get $[[G,\partial]]=0$, which concludes the proof.

For the sake of completeness, we will make explicit the signs
in this final statement. Notice $G$ is a degree
-1 map, while $\partial$ is a degree 1 map. Hence,
\[
[[G,\partial]]=G\circ \partial+\partial\circ G\]
Evaluating this expression on elements, we obtain
\[
\big(G\circ \partial+\partial\circ G\big)(a\ot b,c)
=G\big(\partial a\ot b+(-1) ^{|a|}a\ot\partial b,\partial c\big)
+\partial\big(0,(-1) ^{|a|}[a,b]\big)
\]
\[
=\big(0,(-1)^{|\partial a|}[\partial a, b]\big)
+\big(0,[a,\partial b]\big)
+\big(0,(-1) ^{|a|}\partial[a,b]\big)
\]
Thus, $[[G,\partial]]=0$ if and only if
\[
(-1)^{|\partial a|}[\partial a, b] +[a,\partial b]+
(-1) ^{|a|}\partial[a,b]=0\]
or equivalently, using $|\partial a|=|a|+1$,
\[
\partial[a,b]=
+[\partial a, b] +(-1) ^{|a|-1}[a,\partial b]
\]
\end{proof}

\section{Applications to Poisson Cohomology: the Modular Class}

Our first main application is to show that, in the Poisson case,
 the derivation of
Theorem \ref{main} is given
by the so-called modular class.
To begin, let us recall the notion of the modular class of a 
Poisson manifold. Readers familiar
 with this concept may go directly to Theorem \ref{bider}.

Given $f_0\in C^{\infty}(M)$ we have the associated Hamiltonian vector field $X_{f_0}$
given by
\[
X_{f_0}:=\{f_0,-\}:C^{\infty}(M)\to C^{\infty}(M)
\]
\[\hskip 2cm
g\mapsto \{f_0,g\}
\]
Choose a divergence operator and consider the function
$\div(X_{f_0})\in C^\infty(M)$. If we consider $f$ as
 a variable then the assignemnt
\[
C^\infty(M)\to C^\infty(M)
\]
\[
f\mapsto \div(X_f)
\]
satisfies
\[
fg\mapsto 
\div(\{fg,-\})
=
\div(f\{g,-\})+
\div(g\{f,-\})
\]
\[=
f\div(\{g,-\})+\{g,f\}+
g\div(\{f,-\})+\{f,g\}
\]
\[
=
f\div(X_g)
+g\div(X_fx)
\]
Thus, the asigment $f\mapsto \div(X_f)$ is a derivation,
 it corresponds to a vector field. One can see that
 it preserves
the Poisson bracket (as  will be made clear later),
and its class in the Poisson cohomology is called the 
{\em modular class}.

This construction may be reformulated in terms of the 
 BV-structure on 
 $(\X ^\bullet(M),\Delta_\div)$, and for
  the same price, one can define a modular classe 
for any Lie Rinehart algebra together with a
divergence operator and
  differential of the form $\dpi=[\Pi,-]$, for some
   $\Pi\in\Lambda ^2_AL$ satisfying $[\Pi,\Pi]=0$.
   
In the Poisson case, it is worth noting that
\[
\{f,-\}=\dpi(f)=-[\Pi,f]
\]
and $\div(X)=-\Delta(X)$. Since $\Delta$ derives the bracket and $\Delta(f)=0$ we obtain
\[
\Delta [\Pi,f]=[\Delta(\Pi),f]
\]
We have a cannonical element attached to this situation
that computes the usual modular class in the Poisson case:
\[
X_\Delta:=\Delta(\Pi)
\]
We can observe that
\[
0=\Delta[\Pi,\Pi]=[\Delta(\Pi),\Pi]-[\Pi,\Delta(\Pi)]
=2[\Delta(\Pi),\Pi]
\]
Hence $\dpi(X_\Delta)=0$ and so, $X_\Delta$ defines a class
in $H^\bullet(\Lambda ^\bullet_AL,\dpi)$. In particular
this argument shows that vector field defined 
in the Poisson case actually gives a cohomology class.

\begin{definition}\label{modclassLieRinehart}
Given a Lie Rinehart pair $(A,L)$ with divergence operator
and corresponding BV operator $\Delta$, if 
$\Pi\in\Lambda ^2_AL$ satisfies $[\Pi,\Pi]=0$, then 
its modular class is defined as the the class of 
\[
X_\Delta:=\Delta(\Pi)\in H ^1(\Lambda ^\bullet_AL,\dpi)
\]
\end{definition}

\begin{rem}
The modular class $X_\Delta$ depends on the BV operator 
$\Delta$, or equivalently on the divergence operator $\div$. If
$a_0\in A$ and we define
\[\wt\div X:=\div X- X(a_0)
\]
then $\wt\div$ is another divergence operator, the corresponding BV operator is
\[
\wt\Delta=\Delta+[a_0,-]
\]
and the corresponding modular class is
\[
X_{\wt \Delta}=\wt\Delta(\Pi)=\Delta(\Pi)+[a_0,\Pi]
=X_\Delta+\dpi(a_0)
\]
We see that $X_{\wt\Delta}$ gives the same
class in $H^1(\Lambda ^\bullet_AL,\dpi)$ as
$X_\Delta$.
\end{rem}

The first application of Theorem \ref{main} is the following:

\begin{teo}
\label{bider}
Let $(A,L)$ be a Lie-Rinehart algebra with
divergence operator $\div$ and BV operator $\Delta$.
If $\partial$ is a Gerstenhaber differential of the form
$\dpi=[\Pi,-]$ then
\[
\Delta\dpi+\dpi \Delta=[X_\Delta,-]
\]
where, as before, $X_\Delta=\Delta(\Pi)\in L$.
In particular, if $\Pi$ is the bivector of a Poisson 
structure on a manifold $M$, then the derivation
$\Delta\dpi+\dpi\Delta$ is given by its modular class.
\end{teo}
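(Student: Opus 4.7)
The plan is to invoke Theorem~\ref{main} to reduce the identity to a check on algebra generators. By that theorem, the degree-zero map $D := \Delta\dpi + \dpi\Delta$ is a derivation of $\Lambda^\bullet_A L$ with respect to the associative product. On the other hand, since $|X_\Delta| = 1$, the Gerstenhaber axiom shows that $[X_\Delta,-]$ is also a derivation of the product. Two derivations that agree on a generating set must agree on the whole algebra, so it suffices to verify $D(a) = [X_\Delta, a]$ for $a \in A$ and $D(X) = [X_\Delta, X]$ for $X \in L$.

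For $a \in A$, degree reasons force $\Delta(a) = 0$, hence $D(a) = \Delta\dpi(a) = \Delta[\Pi, a]$. Here I would use the standard BV fact that $\Delta$ is a (super)derivation of its own bracket---an immediate consequence of $\Delta^2 = 0$ applied to the BV formula defining $[-,-]_\Delta$. Combined with $\Delta a = 0$, this gives $\Delta[\Pi, a] = [\Delta\Pi, a] = [X_\Delta, a]$. For $X \in L$, one has $\Delta(X) = -\div(X) \in A$, so $D(X) = \Delta[\Pi, X] - [\Pi, \div X]$. Applying the same derivation rule to $\Delta[\Pi, X]$ and using that $\Delta$ vanishes on degree-zero elements yields $\Delta[\Pi, X] = [\Delta\Pi, X] + [\Pi, \div X]$, after which the two $[\Pi, \div X]$ contributions cancel and what remains is precisely $[\Delta\Pi, X] = [X_\Delta, X]$, as required.

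The concluding Poisson statement is then immediate: by the discussion preceding Definition~\ref{modclassLieRinehart}, the vector field $X_\Delta = \Delta(\Pi) \in \X(M)$ is a representative of the modular class of the Poisson manifold, so the derivation $\Delta\dpi + \dpi\Delta = [X_\Delta,-]$ is indeed the inner derivation determined by that class. The main technical subtlety in the argument is the careful sign bookkeeping in the derivation identity $\Delta[u,v] = [\Delta u, v] \pm [u, \Delta v]$; with that formula in hand, the rest is a short mechanical check on the two types of generators, everything else being packaged inside Theorem~\ref{main}.
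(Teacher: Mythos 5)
Your proposal is correct, but it takes a genuinely different route from the paper at the crucial step. You reduce to generators via Theorem~\ref{main} (as the paper does), but then you dispatch the checks on $A$ and $L$ by invoking the abstract fact that a square-zero BV operator is a graded derivation of the bracket it generates, $\Delta[u,v]=[\Delta u,v]+(-1)^{|u|-1}[u,\Delta v]$, which is indeed a two-line consequence of $\Delta^2=0$ and the defining formula for $[-,-]_\Delta$ (and is the same fact the paper itself uses, without proof, in the discussion preceding Definition~\ref{modclassLieRinehart}, e.g.\ to show $[\Delta(\Pi),\Pi]=0$). The paper's proof instead avoids this lemma and verifies the two generator identities by brute force: it writes $\Pi=\sum_i X_i\wedge Y_i$, expands $\Delta$ through the divergence formula, and matches terms using the Jacobi identity and the two axioms of Definition~\ref{defdiv}. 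Your signs work out: with $|\Pi|=2$ one gets $\Delta[\Pi,X]=[\Delta\Pi,X]-[\Pi,\Delta X]=[\Delta\Pi,X]+[\Pi,\div X]$, and the $[\Pi,\div X]$ terms cancel against $\dpi\Delta(X)=-[\Pi,\div X]$. In fact your observation proves slightly more than you claim: since the sign $(-1)^{|\Pi|-1}=-1$ does not depend on the argument, the identity $\Delta\dpi(u)+\dpi\Delta(u)=[\Delta\Pi,u]$ holds for \emph{every} $u\in\Lambda^\bullet_AL$ directly, so neither the reduction to generators nor Theorem~\ref{main} is strictly needed once the derivation property of $\Delta$ on its bracket is granted. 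What the paper's longer computation buys is self-containedness (it never has to establish that lemma) and an explicit illustration of how the divergence axioms enter; what your argument buys is brevity and a conceptual explanation of why the homotopy is inner, at the modest cost of citing or verifying the standard BV derivation identity.
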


\begin{rem}
It is well-known that the Lie algebra of derivations contains
the set of inner derivations as an ideal. However,
 in the above case, it is not true that 
$\Delta$ is a derivation, but rather a differential operator
of degree 2. Nevertheless, the formula 
in Theorem \ref{bider}       works as if $\Delta$
 were a derivation.
\end{rem}

\begin{proof}
From Theorem \ref{main} we know
$\Delta\dpi+\dpi \Delta$ is a derivation of
$\Lambda_A ^\bullet L$. In order
to show that it agrees with $[X_\Delta,-]$, it is enough
to prove that it agrees on $A$ and $L$.

By abuse of notation, let us omit the sum and write
$\Pi=X_i\wedge Y_i$ instead of $\Pi=\sum_iX_i\wedge Y_i$.
If $f\in A$ then 
\[
\Delta([\Pi,f])
+[\Pi,\Delta(f)]=
\Delta([\Pi,f])
=\Delta([f,X_i\wedge Y_i])
\]
\[=\Delta(
X_i(f) Y_i-
Y_i(f) X_i
)
=\div(
X_i(f) Y_i-
Y_i(f) X_i
)
\]
\[
=
Y_i(X_i(f))
+X_i(f)\div( Y_i)-
X_i(Y_i(f))
-Y_i(f)\div( X_i)
\]
\[
=
-[X_i,Y_i](f)
+X_i(f)\div( Y_i)
-Y_i(f)\div( X_i)
\]
\[
=
\Big(-[X_i,Y_i]
+\div( Y_i)X_i
-\div( X_i)Y_i\Big)(f)
=
\Delta(\Pi)(f)
\]
For $X\in L$ let us compute first
\[
[\Delta(\Pi),X]
=
\Big[
\Delta(X_i\wedge Y_i)
,X\Big]=
\]
\[
=
\Big[
-[X_i,Y_i]-\div(X_i)Y_i+\div(Y_i)X_i
~,~X\Big]
\]
\[
=
-[[X_i,Y_i],X]-[\div(X_i)Y_i,X]+[\div(Y_i)X_i,X]
\]
using Jacobi identity and $[fY,X]=f[Y,X]-X(f)Y$
\[
(*)=
-[[X_i,X],Y_i]
-[X_i,[Y_i,X]]
-\div(X_i)[Y_i,X]
+X(\div X_i)Y_i
+\div(Y_i)[X_i,X]
-X(\div Y_i)X_i
\]
Now we compute
\[
[\Pi,\Delta X]+\Delta[\Pi,X]
=
[X_i\wedge Y_i,-\div(X)]+\Delta([X_i,X]\wedge Y_i+X_i\wedge[Y_i,X])
\]
\[
=
X_i(\div X) Y_i
-Y_i(\div X) X_i\]
\[
-[[X_i,X], Y_i]-\div([X_i,X]) Y_i+\div(Y_i)[X_i,X]
\]
\[-[X_i,[Y_i,X]]
-\div(X_i)[Y_i,X]+\div([Y_i,X])X_i
\]
Recall that the divergence operator satisfies
$
\div([Y,X])
=Y\big(\div(X)\Big)-X\big(\div(Y)\Big)
$. So, the above expresion is equal to
\[
=
X_i(\div X) Y_i
-Y_i(\div X) X_i
-[[X_i,X], Y_i]-[X_i,[Y_i,X]]
\]
\[
-X_i(\div X) Y_i
+X(\div X_i) Y_i+\div(Y_i)[X_i,X]
\]
\[
-\div(X_i)[Y_i,X]+Y_i(\div X)X_i
-X(\div Y_i)X_i
\]

\[
=
-[[X_i,X], Y_i]-[X_i,[Y_i,X]]
+X(\div X_i) Y_i+\div(Y_i)[X_i,X]
-\div(X_i)[Y_i,X]
-X(\div Y_i)X_i
\]
and this is precisely (*).

\end{proof}

\subsection{Unimodularity and BV - structure in Cohomology}

\begin{defi}
Given a Poisson manifold $M$, we say that its 
Poisson structure is {\bf unimodular} if its modular class is
 trivial. In other words if the derivation
\[
f\mapsto \div(\{f,-\})
\]
is given by a Hamiltonian vector field.
\end{defi}
In the Lie-Rinehart setting one can give the
following
natural generalization:

\begin{definition}
Given a Lie Rinehart pair $(A,L)$ with divergence operator
and corresponding BV operator $\Delta$, for an element
$\Pi\in\Lambda ^2_AL$ satisfying $[\Pi,\Pi]=0$, we say that
this structure is {\em unimodular}
if
\[
X_\Delta=\Delta(\Pi)=0\in H ^1(\Lambda ^\bullet_AL,\dpi)
\]
In other words, if there exists $a\in A$ such that
\[
\Delta(\Pi)=[\Pi,a]\in L
\]
\end{definition}

Let us momentarily introduce an auxiliary definition.

\begin{defi}
With same notation as in the previous definition, we say that
$\Pi\in\Lambda ^2_AL$ satisfying $[\Pi,\Pi]=0$
is {\bf strictly} unimodular if
\[
X_\Delta=\Delta(\Pi)=0\in L
\]
\end{defi}

As a direct corollary of Theorem \ref{bider} we obtain that for a
 strictly unimodular structure, the BV operator anti-commutes
  with the differential $\dpi$. Therefore,  in that case,
  $H ^\bullet(\Lambda ^\bullet_AL,\dpi)$ is a BV-algebra.
Fortunately, the following Lemma holds true:

\begin{lem}\label{lemaa}
Let $(A,L)$ be  a Lie Rinehart algebra with divergence
 operator, and $\Pi\in\Lambda ^2_AL$ verifying
  $[\Pi,\Pi]=0$. If
$\Pi$ is (not necesarily strictly) unimodular, that is if
exists $a\in A$ such that
\[
X_\Delta=\dpi(a)=[\Pi,a]
\]
then
\[
\wt\div(X):=\div(X)+X(a)
\]
is another divergence operator, with 
corresponding BV operator $\wt\Delta$, and
$\Pi$ is {\bf strict} unimodular with respect to $\wt\Delta$,
i.e., it verifies
\[
X_{\wt \Delta}=0
\] 
\end{lem}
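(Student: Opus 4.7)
The plan is to reduce the entire lemma to the explicit formula for the modified BV operator established in Remark \ref{divtilde}, and then verify strict unimodularity by a direct two-line calculation using super-antisymmetry of the Gerstenhaber bracket.

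First I would note that Remark \ref{divtilde} already does all of the divergence-side bookkeeping: it asserts that the additive shift $\wt\div(X) := \div(X) + X(a)$ satisfies both axioms of Definition \ref{defdiv}, and that the associated BV operator has the very clean inner form $\wt\Delta = \Delta - [a,-]$. So the lemma reduces to computing $X_{\wt\Delta} = \wt\Delta(\Pi)$ using this formula, with no need to re-verify the Lie-Rinehart axioms from scratch.

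Next, applying the formula, $X_{\wt\Delta} = \wt\Delta(\Pi) = \Delta(\Pi) - [a,\Pi]$. By the unimodularity hypothesis, $\Delta(\Pi) = [\Pi,a]$, so the task is to show $[\Pi,a] = [a,\Pi]$. This is exactly super-antisymmetry on the shifted grading: $a$ has shifted degree $-1$ and $\Pi$ has shifted degree $1$, so $[a,\Pi] = -(-1)^{(-1)(1)}[\Pi,a] = [\Pi,a]$. Substituting gives $X_{\wt\Delta} = [\Pi,a] - [\Pi,a] = 0$, which is precisely strict unimodularity with respect to $\wt\Delta$.

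There is essentially no obstacle here; all the real content has been absorbed into Remark \ref{divtilde} and Definition \ref{modclassLieRinehart}. The one place to be careful is the sign in the shifted super-antisymmetry: for a degree-$0$ and a degree-$2$ element the two brackets coincide (rather than differ by a sign), and this is precisely what makes the subtraction collapse. If one preferred to avoid citing super-antisymmetry directly, the identity $[a,\Pi] = [\Pi,a]$ could be checked on a decomposable expression $\Pi = \sum_i X_i \wedge Y_i$ using the Leibniz rule and $[a,X] = -X(a)$, $[X,a] = X(a)$, but this is just unpacking the same sign computation.
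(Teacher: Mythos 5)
Your proposal is correct and takes essentially the same route as the paper's own proof: both reduce everything to Remark \ref{divtilde} (so that $\wt\Delta=\Delta-[a,-]$) and then compute $X_{\wt\Delta}=\wt\Delta(\Pi)=[\Pi,a]-[a,\Pi]=0$ using the fact that for a degree-$0$ and a degree-$2$ element the Gerstenhaber bracket is symmetric, $[a,\Pi]=[\Pi,a]$. Your explicit check of that sign is a small refinement of a step the paper leaves implicit, but the argument is the same.
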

\begin{proof}
We know $\wt \div(X)=\div(X)+X(a)$ 
is another divergence operator (see Remark \ref{divtilde})
 and $\wt\Delta=\Delta-[a,-]$. We conclude
 \[
\wt\Delta(\Pi)=
\Delta(\Pi)+[a,\Pi]
=[\Pi,a]
-[a,\Pi]
=0\]
\end{proof}

\begin{rem}
Since $\Delta$ and $\wt\Delta$ differs by a derivation, 
they both generates the same 
Gerstenhaber bracket.
\end{rem}

As a corollary, we get an alternative proof of a result
in \cite{CCEY}:

\begin{teo}\label{unimod}
Let $(A,L)$ be a Lie -Rinehart algebra with divergence operator 
$\div$ together with an element
 $\Pi\in\Lambda ^2_AL$ satisfying $[\Pi,\Pi]=0$.
 If $\Pi$ is unimodular, then 
 $H ^\bullet(\Lambda ^\bullet_AL,\dpi)$ is a BV 
 algebra. In particular,
 $H_{Poiss} ^\bullet(M)$ is a BV-algebra 
 when the Poisson structure is unimodular.
\end{teo}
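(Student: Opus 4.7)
The strategy is to reduce to the strictly unimodular case via Lemma \ref{lemaa}, then to invoke Theorem \ref{bider} to obtain a BV operator that anticommutes with $\dpi$, and finally to descend to cohomology.

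Since $\Pi$ is unimodular, Lemma \ref{lemaa} furnishes an element $a\in A$ such that replacing the divergence by $\wt\div(X):=\div(X)+X(a)$ produces a new BV operator $\wt\Delta$ with $\wt\Delta(\Pi)=0$ which still generates the same Gerstenhaber bracket on $\Lambda^\bullet_A L$. In other words, $\Pi$ becomes strictly unimodular with respect to $\wt\Delta$, so there is no loss of generality in replacing $\Delta$ by $\wt\Delta$ from the outset.

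Applying Theorem \ref{bider} to $\wt\Delta$ then gives
\[
\wt\Delta\circ\dpi+\dpi\circ\wt\Delta=[X_{\wt\Delta},-]=0,
\]
so $\wt\Delta$ and $\dpi$ anticommute on the nose. Combined with $\wt\Delta^2=0$, this implies that $\wt\Delta$ preserves both $\ker\dpi$ and $\ima\dpi$, and therefore descends to a well-defined operator $\overline{\wt\Delta}$ on $H^\bullet(\Lambda^\bullet_A L,\dpi)$.

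To finish, I would verify that $\overline{\wt\Delta}$ endows the Gerstenhaber algebra $H^\bullet(\Lambda^\bullet_A L,\dpi)$ with a BV structure generating its bracket. The point is that the BV axioms for $\wt\Delta$ and the identity ``$\wt\Delta$ generates the Gerstenhaber bracket'' are polynomial identities involving $\wt\Delta$, the associative product, and the Gerstenhaber bracket. All three of these pieces of structure descend to cohomology — the product and the bracket because $\dpi$ is a strong Gerstenhaber differential, and $\wt\Delta$ by the anticommutation above — so the identities survive passage to $H^\bullet(\Lambda^\bullet_A L,\dpi)$. The main difficulty has already been dispatched by Theorems \ref{main} and \ref{bider}: once the failure of $\Delta$ to anticommute with $\dpi$ is identified with the inner derivation $[X_\Delta,-]$, unimodularity is exactly the condition that permits gauging this inner derivation away, and the descent to cohomology then goes through automatically.
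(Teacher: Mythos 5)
Your proposal is correct and follows essentially the same route as the paper: replace $\div$ by $\wt\div$ via Lemma \ref{lemaa} so that $\Pi$ becomes strictly unimodular, then use Theorem \ref{bider} to see that $\wt\Delta$ anticommutes with $\dpi$ and hence descends to $H^\bullet(\Lambda^\bullet_AL,\dpi)$. The only difference is that you spell out why the BV axioms and the bracket-generating identity survive the passage to cohomology, a point the paper leaves implicit.
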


\begin{proof}
The operator $\Delta$ does not anticommutes with
$\dpi$, so it does not descend to cohomology.
However, with the notation as in the proof of Lemma 
\ref{lemaa},
the BV operator $\wt\Delta$ does.
\end{proof}

\subsection{Unimodularity condition: the examples in $\R^2$}

\begin{ex}\label{exR2}
 Let $M=\R ^2$, every Poisson structure is
of the form
\[
\Pi=f\partial_x\wedge\partial_y
\]
for some $f\in C ^\infty(M)$.
We consider the standar divergence operator
\[
\div (a\partial_x+b\partial_y)=a_x+b_y
\]
 So, the modular class is computed by
\[
\Delta(f\partial_x\wedge\partial_y)
=
-[f\partial_x,\partial_y]-\div(f\partial_x)\partial_y
+\div(\partial_y)f\partial_x
\]
\[
=
f_y\partial_x -f_x \partial_y
\]
Notice that it is the Hamiltonian vector field associated
 to $f$ for the
usual Poisson structure $\partial_x\wedge \partial_y$, but it 
is not necesarily a Hamiltonian vector field for
$f\partial_x\wedge \partial_y$. If $g\in C^\infty(M)$ we 
have
\[
-\{g,-\}= g_y f\partial_x-g_xf\partial_y
\]
Therefore, $f\partial_x\wedge\partial_y$ 
is unimodular in case there exists $g$ such that
\[
f\nabla g =\nabla f
\]
This condition implies $g=c+\ln|f|$ for some constant $c$,
that only make sense if
 $f$ never vanishes. This corresponds exactly the the case when
 the Poisson structure is symplectic.
 \end{ex}
 
 \begin{rem}
 The vector field 
 \[
 \Delta(f\partial_x\wedge\partial_y)=f_y\partial_x-f_x\partial_y
 \]
 is orthogonal to $\nabla f$. In this example,
 the integral curves of the modular class describe the level curves of the original $f$.
 \end{rem}

\section{Applications to Lie Bialgebras}
\subsection{Lie bialgebras and the intrinsic biderivation}

If $(\g,[-,-],\delta)$ is a Lie bialgebra, then the cannonical endomorphism given by the composition
\[
\xymatrix{
\ar@/ _2ex/[rr] _{\D:=[-,-]\circ \delta} g\ar[r] ^\delta&\Lambda ^2\g\ar[r] ^{[-,-]}&\g
}\]
is a biderivation, that is, it is simultaneously a derivation
for the bracket and a coderivation with respect to the bracket
(see \cite[Proposition 2.2 and 2.3]{FJ}), and in the case $\delta(x)=\ad_x(r)$ for some $r\in \Lambda ^2\g$ then $\D=[H_r,-]$ where
$-H_r=[-,-](r)$. This element plays a prominent role
in the classification of complex and real simple 
Lie bialgebras, for instance, 
in case $r$ is given by Belavin and Drinfel'd classification
 \cite{BD},  it is proven in
\cite{AJ} that $H_r$ is a regular element, hence its centralizer
 is a Cartan subalgebra, and this is necesarily
 {\em the} Cartan subalgebra apearing in Belavin-Drinfeld's
 classification theorem. 
 We notice that in general $[r,r]\neq 0$ (e.g. the bialgebras
 structures in Belavin-Drinfeld's classifications). 
 The 
 condition for $r$ that ensures $\delta(v)=\ad_v(r)$ is a 
 Lie cobracket is
  $[r,r]\in(\Lambda ^3\g)^\g$. The elements 
  $r\in\Lambda ^2\g$
 such that $[r,r]=0$ are called triangular Lie bialgebra
 structures, they are poorly understood and
 their classification is considered wild in some sense.
 Hence, even when $\delta(x)=\ad_x(r)$ with 
 $r\in \Lambda ^2\g$, this element $H_r$ is not,
  strictly speacking,
 the modular class (in the sense of
 Definition \ref{modclassLieRinehart}) 
 associated with the Lie bialgebroid
$ (\Lambda ^\bullet \g,\dd)$. 
Moreover if a  Lie bialgebra structure on a Lie 
algebra $\g$ is given by $\delta(x)=\ad_x(r)$  for some
 $r\in\Lambda^2\g$ then $\delta$ is called a {\em
  coboundary},
and not every cobracket is coboundary
(e.g. in the abelian Lie algebra only the zero cobracket
is coboundary). This is a remarkable difference compared
to the
Poisson situation, where a Poisson structure is defined 
by an element $\Pi\in\X^2(M)$ satisfying $[\Pi,\Pi]=0$, and
$\dpi=[\Pi,-]$.
However, one 
can easily deduce from Theorem \ref{main} the following
analogous  result for Lie bialgebras:

\begin{teo}\label{teobialg}
If $\g$ is a Lie bialgebra and  $\D=[,]\circ\delta$
then
\[
\dce\circ\dd+\dd\circ\dce=-\partial_\D
\]
where $\partial_\D:\Lambda ^\bullet\g\to\Lambda ^\bullet\g$
is the only derivation in $\Lambda ^\bullet\g$ that agrees with $\D$ on $\g$:
\[
\partial_\D(x_1\wedge\cdots \wedge x_k)=
\sum_{i=1} ^k
x_1\wedge\cdots \wedge \D(x_i)\wedge\cdots \wedge x_k
\]

\end{teo}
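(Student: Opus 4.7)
The plan is to deduce this directly from Theorem \ref{main}. Recall that $\Lambda^\bullet\g$ carries a BV-structure whose BV-operator is $\dce$ (this corresponds to the zero divergence on the Lie--Rinehart pair $(k,\g)$ with trivial anchor), and, as noted in Section 1, this $\dce$ generates the Gerstenhaber bracket of $\Lambda^\bullet\g$. Under the Lie-bialgebroid-over-a-point dictionary recalled after Definition \ref{Gdif}, the cobracket $\delta:\g\to\Lambda^2\g$ extends uniquely to a strong differential $\dd:\Lambda^\bullet\g\to\Lambda^{\bullet+1}\g$ for this Gerstenhaber structure. Thus the hypotheses of Theorem \ref{main} are satisfied, and its conclusion gives that the anticommutator $\dce\circ\dd+\dd\circ\dce$ is a derivation of $\Lambda^\bullet\g$ for the wedge product.

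By construction $\partial_\D$ is also a derivation of the associative algebra $\Lambda^\bullet\g$, and $\g$ generates $\Lambda^\bullet\g$ as an associative algebra. Therefore it suffices to check the identity on elements $x\in\g$, i.e.\ to verify $(\dce\dd+\dd\dce)(x)=-\D(x)$.

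For $x\in\g$ one has $\dce(x)=0$ by degree reasons (the sum defining $\dce$ is empty on a single generator), whence $\dd\dce(x)=0$. On the other hand $\dd(x)=\delta(x)$, and writing $\delta(x)=\sum_i y_i\wedge z_i$ we compute from the CE formula $\dce(y_i\wedge z_i)=(-1)^{1+2}[y_i,z_i]=-[y_i,z_i]$, so that
\[
(\dce\circ\dd+\dd\circ\dce)(x)=-\sum_i[y_i,z_i]=-[-,-]\circ\delta(x)=-\D(x),
\]
which is $-\partial_\D(x)$ on $\g$.

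The only real substance in this argument lies in the appeal to Theorem \ref{main}: once one is convinced that $\dd$ is a strong differential for the Gerstenhaber bracket on $\Lambda^\bullet\g$ (equivalently, that $\delta$ is a Lie bialgebra cobracket, which is precisely the assumption), the anticommutator is automatically a derivation, and the verification on generators is a one-line sign check. The potential pitfall is purely cosmetic, namely keeping the CE sign $(-1)^{i+j}$ straight so that the global minus sign in $-\partial_\D$ comes out correctly.
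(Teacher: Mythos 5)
Your proposal is correct and follows essentially the same route as the paper: invoke Theorem \ref{main} to get that $\dce\circ\dd+\dd\circ\dce$ is a derivation of the wedge product, then check on $x\in\g$ using $\dce(x)=0$ and $\dce(y\wedge z)=-[y,z]$, which yields $-\D(x)$. The extra detail you supply (verifying that $\dce$ generates the bracket and that $\dd$ is a strong differential) is exactly the implicit setup the paper relies on, so there is nothing to add.
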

\begin{proof}
From Theorem \ref{main} we already know that
\[
\dce\circ\dd+\dd\circ\dce
\]
is a derivation with respect to the wedge product.
Therefore, we only need to show
 that it agrees with $-\D$ in $\g$.
Now if
 $x\in \g$, then $\dce(x)=0$ so
 \[
(\dce\circ \dd+\dd\circ\dce)(x)
=
\dce( \dd(x))+0
=
-[-,-](\delta(x))=-\D(x)
\]
\end{proof}
\begin{rem}
The above result gives an alterantive proof to the fact
 that $\D$ is a derivation with respect to the Lie
  bracket of $\g$
(since $-\partial_\D$ is a derivation for the Gerstenhaber
bracket of $\Lambda ^\bullet\g$), and that is also a
 coderivation with respect to $\delta$ (since $-\partial_\D$
commutes with $\dd$).
\end{rem}

Recall a Lie bialgebra is called {\em involutive} if 
$\D(x)=0\forall x$. In that case (and only in that case)
 we have that the 
Chevalley-Eilenberg differential anti-commutes with $\dce$,
so it is well-defined in the cohomology (with respect to $\dd$).
From Theorem \ref{teobialg} we obtain the following:

\begin{coro}
If $\g$ is an involutive Lie bialgebra then
$H ^\bullet(\Lambda\g,\dd)$ is a BV-algebra.
\end{coro}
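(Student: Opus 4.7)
The plan is to deduce the corollary directly from Theorem \ref{teobialg} combined with the fact, recalled in Section 1, that $(\Lambda^\bullet \g, \dce)$ is itself a BV-algebra whose BV-operator $\dce$ generates the canonical Gerstenhaber bracket on $\Lambda^\bullet \g$. Under the involutivity hypothesis, I will show that $\dce$ anticommutes with $\dd$, hence descends to $H^\bullet(\Lambda \g, \dd)$, and then check that the BV structure passes to cohomology.

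First I would observe that involutivity, $\D(x)=0$ for all $x\in\g$, immediately forces $\partial_\D=0$ on the whole algebra $\Lambda^\bullet\g$, since $\partial_\D$ is defined as the \emph{unique} derivation of the wedge product that agrees with $\D$ on generators, and a derivation that vanishes on a set of algebra generators is identically zero. Feeding this into Theorem \ref{teobialg} gives
\[
\dce\circ\dd+\dd\circ\dce=-\partial_\D=0,
\]
which means $\dce$ anti-commutes with $\dd$ at the chain level. Consequently $\dce$ restricts to cycles and sends boundaries to boundaries, so it induces a well-defined degree $-1$ operator on $H^\bullet(\Lambda\g,\dd)$, which I will continue to denote by $\dce$.

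Next I would verify that the full BV-package descends to cohomology. Since $\dd$ is a strong differential for the Gerstenhaber structure (Definition \ref{Gdif}), the cohomology $H^\bullet(\Lambda\g,\dd)$ inherits a Gerstenhaber algebra structure from $\Lambda^\bullet\g$. The induced $\dce$ satisfies $\dce^2=0$ because it did already at the chain level, and the seven-term BV identity for $\dce$ is a pointwise identity in $\Lambda^\bullet\g$, so it survives passage to cohomology. Moreover, because $\dce$ generates the Gerstenhaber bracket on $\Lambda^\bullet\g$, the induced operator generates the induced bracket on $H^\bullet(\Lambda\g,\dd)$. This gives the desired BV-algebra structure.

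There is essentially no obstacle: the content is entirely packaged in Theorem \ref{teobialg}, and the involutivity assumption is tailored precisely to kill the obstruction $\partial_\D$ to $\dce$ descending. The only step requiring a line of justification is the elementary remark that $\partial_\D\equiv 0$ on $\Lambda^\bullet\g$ when $\D\equiv 0$ on $\g$, which follows from the uniqueness clause in the definition of $\partial_\D$.
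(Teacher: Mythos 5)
Your proposal is correct and follows essentially the same route as the paper: Theorem \ref{teobialg} gives $\dce\circ\dd+\dd\circ\dce=-\partial_\D$, involutivity kills $\partial_\D$ (a derivation vanishing on generators is zero), so $\dce$ anticommutes with $\dd$ and descends, carrying the BV structure to $H^\bullet(\Lambda\g,\dd)$. The only difference is that you spell out the routine verifications (square zero, the seven-term identity, and generation of the induced bracket passing to cohomology) that the paper leaves implicit.
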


\subsection{Minimal examples of Lie Bialgebras}

If the Lie algebra is abelian then $\D$ is trivially zero,
but the smallest non-abelian Lie algebra already 
give rise to interesting
bialgebra structures and interestings $\D$'s.

Let $\g=\aff_2(k)$  the non abelian 2-dimensional Lie algebra
with basis 
$\{h,x\}$ and bracket
\[
[h,x]=x
\]
The following is the list of all isomorphism classes of Lie bialgebra structures on the underlying Lie algebra $\g$:
\[
\begin{array}{||c|rcl|ccc||}
\hline
&&\delta&&&\D& \\
\hline
\hline
1)&\delta&\equiv& 0&&\D\equiv0& \\
\hline
2)&\delta(x)&=&0&&\D(x)=0&\D=-[x,~]\\
&\delta(h)& =
 & h\wedge x&&\D(h)=x&\\
\hline
3)&\delta(x)&=&\lambda h\wedge x&&\D(x)=\lambda x&\D=[\lambda h,~]\\
(\lambda\neq 0)&\delta(h)&=&0&&\D(h)=0&\\
\hline
\end{array}
\]
Only the cocommutative case (1) is involutive (unimodular).
Case (2) is coboundary:  
$\delta(v)=\ad_v( h\wedge x)$.
Case (3) is not coboundary:  $\delta$ is {\em not} given
by $\delta(v)=\ad_v(r)$ for any $r\in\Lambda ^2\g$.
Notice that $\D$ is diagonalizable in case (3).

%
%

\subsection{The non unimodular case}

Since $\Delta\dd+\dd\Delta$ is null-homotopic (both for 
$\Delta$ and $\dd$), we can apply Theorem \ref{teobialg}
to obtain the following corollary:

\begin{coro} \label{corodiag}
Let $\g$ be a finite dimensional Lie bialgebra
and $\D=[-,-]\circ\delta$ its characteristic biderivation.
If $\D$ is diagonalizable, then
the inclusion of the subcomplex of $\D$-invariants
$\big((\Lambda ^\bullet\g) ^\D,\dd|\big)$ 
into $(\Lambda ^\bullet,\dd)$ is a quasi-isomorphism.
\[
i:\big((\Lambda ^\bullet\g) ^\D,\dd|\big)
\overset{qis}{\longrightarrow}( \Lambda ^\bullet \g,\dd)
\]
\end{coro}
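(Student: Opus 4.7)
The plan is to exploit the identity $\dce\dd+\dd\dce=-\partial_\D$ from Theorem~\ref{teobialg}, together with the diagonalizability of $\D$, in order to produce an explicit contracting homotopy for $\dd$ on the non-invariant part of $\Lambda^\bullet\g$.

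First, I would extend the diagonalizable endomorphism $\D\colon\g\to\g$ to its derivation extension $\partial_\D$ on $\Lambda^\bullet\g$ (as in the statement of Theorem~\ref{teobialg}). Picking an eigenbasis $\{v_i\}$ with $\D v_i=\lambda_i v_i$, the wedge monomials $v_{i_1}\wedge\cdots\wedge v_{i_k}$ form an eigenbasis of $\partial_\D$ with eigenvalues $\lambda_{i_1}+\cdots+\lambda_{i_k}$. Hence $\partial_\D$ is diagonalizable on $\Lambda^\bullet\g$, and I obtain a weight decomposition
\[
\Lambda^\bullet\g=\bigoplus_\lambda (\Lambda^\bullet\g)_\lambda,
\qquad (\Lambda^\bullet\g)^\D=(\Lambda^\bullet\g)_0.
\]

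Second, I would check that both $\dd$ and $\dce$ commute with $\partial_\D$, so that each weight space is a subcomplex for $\dd$. This follows by taking the graded commutator of the identity $\partial_\D=-(\dce\dd+\dd\dce)$ with $\dd$ (respectively $\dce$); the resulting expression collapses to zero using only $\dd^2=0=\dce^2$.

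Third, I would restrict to the weight-$\lambda$ subcomplex. There Theorem~\ref{teobialg} simply reads $\dce\dd+\dd\dce=-\lambda\cdot\id$, so for $\lambda\neq 0$ the operator $h_\lambda:=-\dce/\lambda$ is a chain contraction of the identity to zero. Hence $\dd$ has trivial cohomology on every summand with $\lambda\neq 0$, and the cohomology of $(\Lambda^\bullet\g,\dd)$ is carried entirely by $(\Lambda^\bullet\g)_0=(\Lambda^\bullet\g)^\D$. Patching the $h_\lambda$ together yields a global chain homotopy exhibiting $i$ as a deformation retract, hence as a quasi-isomorphism.

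I do not foresee any serious obstacle: the argument is formal once the weight decomposition is in place, and diagonalizability of $\partial_\D$ on the exterior algebra is automatic from that of $\D$ on $\g$ since $\partial_\D$ is the derivation extension of $\D$. The mildest technical point is that only finitely many weights occur (guaranteed by $\dim\g<\infty$), so the piecewise homotopy $h:=\bigoplus_{\lambda\neq 0}h_\lambda$ is well-defined.
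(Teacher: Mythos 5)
Your proposal is correct and follows essentially the same route as the paper: decompose $\Lambda^\bullet\g$ into weight spaces of the derivation extension $\partial_\D$, observe that Theorem \ref{teobialg} makes $\partial_\D$ null-homotopic with respect to $\dd$, and conclude that every nonzero-weight summand is acyclic. Your explicit contracting homotopy $h_\lambda=-\dce/\lambda$ and the commutator check that $\dd$ preserves the weight spaces are just the details the paper leaves implicit.
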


\begin{proof}
Since $\D$ is diagonalizable in $\g$, the derivation
  $\partial_\D$
is also diagonalizable in $\Lambda^\bullet\g$. We
can decompose the complex 
$( \Lambda ^\bullet \g,\dd)$ into a a direct sum 
of subcomplexes
indexed 
by the eigenvalues of $\dd$. Since $\partial_\D$ induces
 the zero
 map in
cohomology, but it is a multiple of the identity in each
summand, we conclude that every direct summand
 associated to a non-zero eigenvalue is acyclic, because
a non-zero multiple of the identity equals zero on
its cohomology.
\end{proof}

\begin{ex}
For $\g=\sl_2(\C)=\C x\oplus \C h\oplus \C y$ with
structure constants
\[
[h,x]=2x,\ [x,y]=h,\ [h,y]=-2y
\] 
and cobracket given by $\delta(v)=\ad_v(x\wedge y)$,
 we have $-\D=[h,-]$. The full complex $(\Lambda ^\bullet\g,\dd$) is
 \[
 0\to \C\overset{0}{\longrightarrow} \big(\C x\oplus \C h\oplus \C y\big)
\overset{\delta}{\longrightarrow}
 \big(\C x\wedge h\oplus \C h\wedge y\oplus \C y\wedge x
 \big)
\overset{\dd}{\longrightarrow}
 \C x\wedge h\wedge y\to 0
 \]
 while the invariant subcomplex 
 $\big((\Lambda ^\bullet\g) ^\D,\dd|\big)=
\big( (\Lambda ^\bullet\g) ^h,\dd|\big)$ is
 \[
 0\to \C\overset{0}{\longrightarrow}
  \C h   \overset{0}{\longrightarrow}
    \C y\wedge x  \overset{\cong}{\longrightarrow} 
 \C x\wedge h\wedge y\to 0
 \]
 We conclude $H^0=\C$ and $H^1=\C h$. In this case
 both computations are possible but the second one is easier.
 \end{ex}

\begin{ex}
For $\g=\sl(3,\C)$ we take basis $x_1=E_{12},
x_2=E_{23},x_3=E_{23},h_1=E_{11}-E_{22},
h_2=E_{22}-E_{33},y_1=x_1 ^t,y_2=x_ 2 ^t,y_3=-x_3^t$.
We also use the convention $h_3:=h_1+h_2$. In this example,
the so-called standard $r$-matrix is given by
\[
r=x_1\wedge y_1+
x_2\wedge y_2+x_3\wedge y_3\]
So
\[
H_r=[x_1, y_1]+[x_2, y_2]+[x_3, y_3]=
h_1+h_2+h_3=2(E_{11}-E_{33})
\]
We have $\g ^{H_r}=\h=\C h_1\oplus \C h_2$.
We also have 
$\dim \Lambda^2\g=28$ and $\dim \Lambda ^3\g=56$ while
$\dim (\Lambda^2\g) ^\D=6$ and
$\dim (\Lambda^2\g) ^\D=12$. In fact, the basis for
$\dim (\Lambda^2\g) ^\D$ is given by
\[
\{h_1\wedge h_2,\ 
x_1\wedge y_1,\ 
x_2\wedge y_2,\ 
x_3\wedge y_3,\ 
x_1\wedge y_2,\ 
x_2\wedge y_1\}\]
Recall the differential $D_\delta=[r,-]=[x_1\wedge y_1+
x_2\wedge y_2+x_3\wedge y_3,-]$. In order to compute $H^\bullet(\Lambda ^\bullet\g,D_\delta)$ in small dimensions
we clearly have
\[
[r,h]=0\ \ \forall h\in \h\]
and so $H^1(\g,D_\delta)=0$. And since the differential is zero
in $\h$, the computation of $H ^2(\g,D_\delta)$ can be done by
computing
\[
H ^2(\Lambda ^2\g,D_\delta)=
H ^2((\Lambda ^\bullet\g) ^{H_r},D_\delta)=
\Ker\Big([r,-]:(\Lambda^2\g)^{H_r}\to\Lambda ^3\g\Big)\]
So, need to compute
\[
[r,h_1\wedge h_2],\ 
[r,x_1\wedge y_1],\ 
[r,x_2\wedge y_2],\ 
[r,x_3\wedge y_3],\ 
[r,x_1\wedge y_2],\ 
[r,x_2\wedge y_1]
\]
Since $[h,r]=0$ we easily get $[r,h_1\wedge h_2]=0$, for the others, one compute the following brackets:

\[
[x_i\wedge y_j,x_k\wedge y_l]=
[x_i,x_k]\wedge y_j\wedge y_l
+x_i\wedge [y_j,x_k]\wedge y_l
-x_k\wedge [x_i,y_l]\wedge y_j
-x_k\wedge x_i\wedge [y_j,y_l]
\]
So in particular
\[
[x_i\wedge y_i,x_i\wedge y_i]=-2x_i\wedge h_i\wedge y_i
\]
and
\[
[x_1\wedge y_1,x_2\wedge y_2]
=x_3\wedge y_1\wedge y_2+x_2\wedge x_1\wedge y_3
=-x_1\wedge x_2\wedge y_3+x_3\wedge y_1\wedge y_2
\]
\[
[x_1\wedge y_1,x_3\wedge y_3]
=x_1\wedge x_2\wedge y_3+x_3\wedge y_2\wedge y_1
=x_1\wedge x_2\wedge y_3-x_3\wedge y_1\wedge y_2
\]
\[
[x_2\wedge y_2,x_3\wedge y_3]
=-x_2\wedge x_1\wedge y_3-x_3\wedge y_2\wedge y_1
=x_1\wedge x_2\wedge y_3+x_3\wedge y_1\wedge y_2
\]
So we get
\[
[r,x_i\wedge y_i]=-2x_i\wedge h_i\wedge y_i
\]
We also need to compute
\[
[x_i\wedge y_i,x_1\wedge y_2]
=
[x_i,x_1]\wedge y_i\wedge y_2
+x_i\wedge [y_i,x_1]\wedge y_2
-x_1\wedge [x_i,y_2]\wedge y_i
-x_1\wedge x_i\wedge [y_i,y_2]
\]
so
\[
[x_1\wedge y_1,x_1\wedge y_2]
=
-x_1\wedge h_1\wedge y_2
\]
\[
[x_2\wedge y_2,x_1\wedge y_2]
=
-x_1\wedge h_2\wedge y_2
\]
\[
[x_3\wedge y_3,x_1\wedge y_2]
=
+x_1\wedge x_2\wedge y_3
\]
and get
\[
[r,x_1\wedge x_2]=-x_1\wedge(h_1+h_2)\wedge y_2=-x_1\wedge h_3\wedge y_2
\]
And finally, using
\[
[x_i\wedge y_i,x_2\wedge y_1]=
[x_i,x_2]\wedge y_i\wedge y_1
+x_i\wedge [y_i,x_2]\wedge y_1
-x_2\wedge [x_i,y_1]\wedge y_i
-x_2\wedge x_i\wedge [y_i,y_1]
\]
we get
\[
[x_1\wedge y_1,x_2\wedge y_1]=
-x_2\wedge h_1\wedge y_1
\]
\[
[x_2\wedge y_2,x_2\wedge y_1]=
-x_2\wedge h_2\wedge y_1
\]
\[
[x_3\wedge y_3,x_2\wedge y_1]=0 \quad\quad\quad\quad\quad
\]
so
\[
[r,x_2\wedge y_1]=-x_2\wedge (h_1+h_2)\wedge y_1
=-x_2\wedge h_3\wedge y_1\]
We sumarize the computations:
\[
\begin{array}{rcc}
(\Lambda ^2\g) ^{h_3}&\overset{[r,-]}{\longrightarrow}&(\Lambda ^3\g) ^{h_3}\\
\\
h_1\wedge h_2&\mapsto& 0\\
x_1\wedge y_1&\mapsto&-2x_1\wedge h_1\wedge y_1\\
x_2\wedge y_2&\mapsto&-2x_2\wedge h_2\wedge y_2\\
x_3\wedge y_3&\mapsto&-2x_3\wedge h_3\wedge y_3\\
x_1\wedge y_2&\mapsto&-x_1\wedge h_3\wedge y_2\\
x_2\wedge y_1&\mapsto &-x_2\wedge h_3\wedge y_1\\
\end{array}
\]
We conclude that, for $\g=\sl(3,\C)$ and $\delta(x)=\ad_x(r)$,
where $r=x_1\wedge y_1+x_2\wedge y_2+x_3\wedge y_3$,
the cohomology in low degrees is given by:
\[
\begin{array}{rcl}
H^0(\Lambda ^\bullet\g,D_\delta)&=& \C\\
H^1(\Lambda ^\bullet\g,D_\delta)&=&\h=\C h_1\oplus \C h_2\\
H^2(\Lambda ^\bullet\g,D_\delta)&=&\Lambda ^2\h=\C h_1\wedge h_2\\
\end{array}
\]
\end{ex}

\section{The example 
$(x ^2+y^2)\partial_x\wedge \partial_y$}

We conclude with an application involving  a computation
of a classical example in Poisson geometry.

There are several papers that compute the
 Poisson cohomology of
this example, here we provide a computation showing the
power of our tools.
Consider the manifold $M=\R ^2$ equipped with
the Poisson bracket given by
the following bivector:
\[
(x^2+y^2)\partial_x\wedge \partial_y
\]
We will consider the usual divergence operator in $\R ^2$:
\[
\div\big(a(x,y)\partial_x+b(x,y)\partial_y\big)
=a_x+b_y
\]
The modular class (see Example \ref{exR2})
is
\[
X_\Delta=2(y\partial_x-x\partial_y)
\]

\begin{teo}\label{R2}
Consider the Poisson Manifold 
$(\R^2,\Pi=(x ^2+y^2)\partial_x\wedge\partial_y)$.
Denote
\[
\partial_\theta:=x\partial_y-y\partial_x=\frac12X_\Delta,
\hskip 1cm
D_r:=r\partial_r=x\partial_x+y\partial_y
\]
and
\[
\Vol=\partial_x\wedge\partial_y=\frac1{r}\partial_\theta\wedge\partial_r=\frac1{r ^2}\partial_\theta\wedge D_r\]
Notice $\partial_\theta\wedge D_r=r^2\Vol=\Pi$.
Then,  the
Poisson cohomology is given by:
\[H^0(\R^2,\Pi)=\R \]
\[H^1(\R^2,\Pi)=\R \partial_\theta\oplus \R D_r \]
\[H^2(\R^2,\Pi)=\R \Vol\oplus\R\Pi\]

\end{teo}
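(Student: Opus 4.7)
The plan is to apply the strategy of Corollary \ref{corodiag} in its smooth-geometric incarnation: the modular class generates a compact rotation action, so Poisson cohomology reduces to a much smaller invariant subcomplex. From Example \ref{exR2} with $f = x^2+y^2$ one reads
\[
X_\Delta \;=\; \Delta(\Pi) \;=\; 2y\partial_x - 2x\partial_y \;=\; -2\partial_\theta,
\]
and Theorem \ref{bider} then yields $\dpi\Delta + \Delta\dpi = [X_\Delta,-] = -2L_{\partial_\theta}$. In particular $L_{\partial_\theta}$ is null-homotopic on the Poisson complex $(\X^\bullet(\R^2),\dpi)$ with explicit homotopy $-\tfrac12\Delta$.

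Next, I average over the compact $S^1$-action by rotations. Let $R_\theta:\R^2\to\R^2$ denote rotation by angle $\theta$ and set $P := \frac{1}{2\pi}\int_0^{2\pi}(R_\theta)_*\,d\theta$. Because both $\Pi$ and the standard divergence are rotation-invariant, $(R_\theta)_*$ commutes with $\dpi$ and with $\Delta$; hence $P$ is a chain map that projects onto the invariant subcomplex $\X^\bullet(\R^2)^{S^1}$. Integrating $\tfrac{d}{ds}(R_s)_*\alpha = -(R_s)_*L_{\partial_\theta}\alpha$ together with the null-homotopy gives $P - \id = \dpi H + H\dpi$ for an explicit operator $H$ built from $\Delta$ and the integral over $\theta$. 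Consequently the inclusion $(\X^\bullet(\R^2)^{S^1},\dpi) \hookrightarrow (\X^\bullet(\R^2),\dpi)$ is a quasi-isomorphism.

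Then I compute the invariant subcomplex directly. In degree zero the invariants are radial functions $f(\rho)$ with $\rho := x^2+y^2$; in degree one they are spanned over radial functions by $D_r$ and $\partial_\theta$; in degree two by $\Vol$. Since $[\Pi,D_r] = [\Pi,\partial_\theta] = 0$, Leibniz reduces the differentials to their action on coefficients:
\[
\dpi(f(\rho)) \;=\; -2\rho f'(\rho)\,\partial_\theta, \qquad
\dpi\bigl(a(\rho)D_r + b(\rho)\partial_\theta\bigr) \;=\; 2\rho^2 a'(\rho)\,\Vol,
\]
where the $b$-term vanishes since $\partial_\theta\wedge\partial_\theta=0$. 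Taylor expansion at $\rho=0$ gives $C^\infty([0,\infty))/\rho\,C^\infty = \R$ and $C^\infty([0,\infty))/\rho^2 C^\infty = \R\oplus\R\rho$, which immediately yields the stated $H^0$, $H^1$, and (using $\rho\Vol = \Pi$) $H^2$.

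The delicate point is the second step: the reduction to invariants must be done at the cochain level in the smooth category, which relies on the BV-operator $\Delta$ being $S^1$-equivariant---true here because the standard divergence on $\R^2$ is rotation-invariant. Once this is in place, the remaining work is an elementary computation with Taylor series of radial smooth functions.
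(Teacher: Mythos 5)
Your proposal is correct and follows essentially the same route as the paper: reduce to the subcomplex of rotation-invariant multivector fields (the rotations being generated by the modular vector field, the averaging argument being justified by the null-homotopy $\dpi\Delta+\Delta\dpi=[X_\Delta,-]$ of Theorem \ref{bider}, which you make explicit as $P-\id=\dpi H+H\dpi$), and then compute the small invariant complex via Hadamard/Taylor arguments for radial smooth functions, obtaining the same kernels and images as the paper. The only step you leave schematic---the characterization of smooth $\partial_\theta$-invariant functions, vector fields and bivectors, for which the paper proves an even/odd smoothness lemma---is standard, and your sign $X_\Delta=-2\partial_\theta$ agrees with Example \ref{exR2}, the discrepancy with the statement's $\partial_\theta=\tfrac12 X_\Delta$ being internal to the paper and immaterial to the argument.
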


\begin{rem}
The computation is in agreement with \cite{N} where it is
is shown that $\dim H ^1=2=\dim H ^2$ (Theorems 
4.6 for the first equality and 5.6 for the second one). In 
that work, computations are a bit longer, and there are no
explicit generators, eventhough one could get some with some
 extra work. In our approach, computations are shorter and
  generators are clear. The Gerstenhaber algebra structure 
  is also transparent:
  \[
 \begin{array}{rccccl}
  \partial_\theta\wedge D_r&=&\Pi,\\
  {}[\partial_\theta,-]&=&0&=&[\Pi,-],
  \\
{}  [D_r,\Vol]&=&\!\!\Big[D_r,\frac{1}{r^2}\partial_\theta\wedge D_r\Big]\!\!&=&-2\Vol.
\end{array}
  \]
   Besides, a formality result 
  is clearly seen: the Poisson cohomology is isomorphic to a 
  Gerstenhaber sub-algebra of $\X^\bullet \R^2$.   
Also, the BV structure can be explicitly computed:
\[
\begin{array}{ccl}
\Delta(\partial_\theta)&=&0
\\
\Delta(D_r)&=&-\div(x\partial_x+y\partial_y)=-2
\\
\Delta(\Vol)&=&0
\\
\Delta(\Pi)&=&X_\Delta=2\partial_\theta
\end{array}
\]
\end{rem}

Our proof of Theorem \ref{R2} goes through the following lines: 
The modular class $\frac12 X_\Delta=\partial_\theta$,
is a complete vector field that integrates into an action of 
$S ^1$ by rotations. 
Since $S^1$ is compact, one can use the average trick
and prove the following:

\begin{prop}
The inclusion $(\X ^\bullet(\R^2) ^{X_\Delta},\dpi|)
\to (\X^\bullet(\R^2),\dpi)$ is a quasi-isomorphism.
\end{prop}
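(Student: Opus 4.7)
The plan is to exploit the fact that $\partial_\theta = \frac{1}{2}X_\Delta$ generates a smooth $S^1$-action on $\R^2$ by rotations, then apply the averaging trick. First I would observe that since $X_\Delta = \Delta(\Pi)$ is $\dpi$-closed (the modular cocycle is always a Poisson cocycle), one has $[\partial_\theta, \Pi] = 0$, so every time-$t$ rotation $\phi_t$ is a Poisson diffeomorphism and therefore $(\phi_t)_*$ commutes with $\dpi$ on $\X^\bullet(\R^2)$. Averaging over $S^1$ yields a projection
\[
P\colon \X^\bullet(\R^2) \to \X^\bullet(\R^2)^{X_\Delta}, \qquad P\xi := \frac{1}{2\pi}\int_0^{2\pi}(\phi_t)_*\xi \, dt,
\]
which by compactness of $S^1$ is a well-defined chain map, satisfies $P^2=P$, and is a retraction of the inclusion $i$. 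The content of the proposition is thus reduced to showing that $i\circ P$ is chain homotopic to the identity.

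This is where Theorem \ref{bider} enters decisively: it supplies
\[
L_{\partial_\theta} = [\partial_\theta, -] = \tfrac{1}{2}(\Delta\dpi + \dpi\Delta),
\]
i.e.\ the Lie derivative along $\partial_\theta$ is null-homotopic on the nose, with $\tfrac{1}{2}\Delta$ as the explicit chain homotopy. Combining this with $\frac{d}{ds}(\phi_s)_* = L_{\partial_\theta}\circ(\phi_s)_*$, which integrates to $(\phi_t)_* - \id = \int_0^t L_{\partial_\theta}\circ(\phi_s)_* \, ds$, and using that $\dpi$ commutes with each $(\phi_s)_*$, I would rearrange to obtain
\[
(\phi_t)_* - \id = \dpi\circ h_t + h_t\circ\dpi, \qquad h_t := \tfrac{1}{2}\Delta\circ \int_0^t (\phi_s)_* \, ds.
\]
Averaging in $t$ then produces $P - \id = \dpi\circ H + H\circ\dpi$ where $H := \frac{1}{2\pi}\int_0^{2\pi}h_t\, dt$, exhibiting the desired null-homotopy.

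The main anticipated difficulty is purely formal bookkeeping: justifying that $\dpi$ may be moved past the integrals in $s$ and $t$ (which is routine since the integrand is a smooth operator-valued function on a compact interval) and tracking the graded signs correctly, noting that $\Delta$ has degree $-1$ and $\dpi$ has degree $+1$, so their graded commutator is $\Delta\dpi + \dpi\Delta$, matching the statement of Theorem \ref{bider}. Once these manipulations are in place, $P\circ i = \id$ on the subcomplex of invariants and $i\circ P \simeq \id$, so $i$ is a quasi-isomorphism.
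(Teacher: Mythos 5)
Your proof is correct and follows essentially the same route as the paper: the paper likewise integrates $\partial_\theta=\tfrac12 X_\Delta$ to an $S^1$-action by Poisson diffeomorphisms and invokes the averaging trick over the compact group. Your explicit chain homotopy, built from Theorem \ref{bider} (which exhibits $[X_\Delta,-]=\Delta\dpi+\dpi\Delta$ as null-homotopic via $\Delta$), merely fills in the detail that the paper leaves implicit, in the same spirit as the paper's use of null-homotopy in Corollary \ref{corodiag}.
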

We need to compute $C^{\infty}(\R^2)^{\partial_\theta}$,
 $\X(\R^2) ^{\partial_\theta}$ and
 $\X ^2(\R^2) ^{\partial_\theta}$, which leads to
 a sufficiently small
  complex that allows to compute cohomology 
  by simple calculation. 
We will use the following Lemma from calculus in one variable:
\begin{lem}
Let $h\in C ^\infty(\R)$,
\begin{itemize}
\item if $h(x)=h(-x)$ for all $x\in \R$ then there exist $g\in\C ^\infty(\R)$
such that $h(x)=g(x ^2)$;
\item if $h(x)=-h(-x)$ for all $x\in \R$ then there exist $g\in\C ^\infty(\R)$
such that $h(x)=xg(x ^2)$.

\end{itemize}
\end{lem}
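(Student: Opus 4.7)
The plan is to reduce the odd statement to the even one via Hadamard's lemma, and then prove the even statement by constructing $g$ on $[0,\infty)$ and extending smoothly across the origin.

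For the odd case, oddness forces $h(0)=0$, so Hadamard's lemma produces $H \in C^\infty(\R)$ with $h(x)=xH(x)$. For $x\neq 0$ the identity $-xH(-x)=h(-x)=-h(x)=-xH(x)$ gives $H(-x)=H(x)$, which extends to $x=0$ by continuity. Applying the even case to $H$ yields $g \in C^\infty(\R)$ with $H(x)=g(x^2)$, whence $h(x)=xg(x^2)$.

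For the even case, define $g_+\colon [0,\infty) \to \R$ by $g_+(t):=h(\sqrt{t})$. Smoothness of $g_+$ on $(0,\infty)$ is clear, and at $t=0$ we appeal to Taylor's theorem applied to $h$: for each $N \geq 0$ there exists $\rho_N \in C^\infty(\R)$ (given by the integral form of the remainder) such that
\[
h(x) \;=\; \sum_{j=0}^{N} \frac{h^{(2j)}(0)}{(2j)!}\, x^{2j} \;+\; x^{2N+2}\rho_N(x),
\]
the odd-order Taylor terms vanishing by the evenness of $h$. Substituting $x=\sqrt{t}$ for $t\geq 0$,
\[
g_+(t) \;=\; \sum_{j=0}^{N} \frac{h^{(2j)}(0)}{(2j)!}\, t^{j} \;+\; t^{N+1}\rho_N(\sqrt{t}).
\]
A direct chain-rule computation shows that $t\mapsto t^{N+1}\rho_N(\sqrt{t})$ is of class $C^{N+1}$ on $[0,\infty)$: after $k \leq N+1$ differentiations each summand is of the form $c\, t^{N+1-k+j/2}\,\rho_N^{(j)}(\sqrt{t})$ with $0 \leq j \leq k$, so the exponent of $t$ stays non-negative and the summand extends continuously to $t=0$. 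Since $N$ is arbitrary, $g_+ \in C^\infty([0,\infty))$. Seeley's extension theorem then provides $g \in C^\infty(\R)$ restricting to $g_+$ on $[0,\infty)$, and by construction $h(x)=g(x^2)$.

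The main obstacle is the $C^\infty$-regularity of $g_+$ at the boundary point $t=0$; this is precisely where evenness is used, via the cancellation of odd-order Taylor coefficients of $h$, together with the bookkeeping estimate above. The remaining pieces — the Hadamard reduction for the odd case and the Seeley extension across $t=0$ — are routine once boundary regularity has been established.
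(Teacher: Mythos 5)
Your proof is correct. Note that the paper itself offers no proof of this lemma --- it is quoted as a known one-variable calculus fact (it is Whitney's classical theorem on even and odd smooth functions) --- so there is no in-paper argument to compare against; what you have written is essentially the standard Whitney-style proof. The two genuinely non-trivial points are handled properly: the evenness of $h$ kills the odd Taylor coefficients, and your exponent bookkeeping for $t^{N+1}\rho_N(\sqrt t)$ shows every derivative of order $k\le N+1$ of the remainder extends continuously to $t=0$ (the passage from ``derivatives on $(0,\infty)$ with continuous limits at $0$'' to one-sided $C^{N+1}$ regularity is the usual mean-value-theorem step, which you may take as routine), so $g_+=h(\sqrt{\,\cdot\,})$ is smooth on $[0,\infty)$; Seeley (or Whitney/Borel) extension then produces $g\in C^\infty(\R)$, and since only the values of $g$ on $[0,\infty)$ enter $h(x)=g(x^2)$, any smooth extension suffices. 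The reduction of the odd case via Hadamard's lemma and the evenness of the quotient $H$ is also correct.
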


\begin{proof}[Proof of Theorem \ref{R2}]
First we claim that

{\em 
 $f\in C^\infty(\R^2)$ verifies $(y\partial_x-x\partial_y)(f)=0$
if and only if $f(x,y)=g(r ^2)$
for some $g\in\C ^\infty(\R)$, 
where as usual $r ^2=x^2+y^2$.}

For that, we  see that
$f$ is necessarily invariant by rotations, so
\[
f(x,y)=f\big(\sqrt{x^2+y^2},0\big)
\]
Define $h(x):=f(x,0)$, clearly $h\in C^{\infty}(\R)$. But
since $f(x,0)=f(-x,0)$, the function
$h$ is even. We conclude $h(t)=g(t ^2)$
for a smooth function $g$.

Now if a $f$ is a function of the form
 $f(x,y)=g(r^2)$, the first differential is given by
\[
\dpi(f)=[\Pi,f]=\Big[ r\partial_\theta\wedge\partial_r,g(r ^2)\Big]=
r \partial_r(g(r ^2))\partial_\theta=2r ^2g'(r^2)\]
We get that the kernel is given by the constant functions, so 
$H ^0=\R$, and the image is given by the vector fields of the form
\[
\Im(\dpi|)=\big\{r^2h(r^2)\partial_\theta: h\in C ^{\infty}(\R)
\big\}
\]
because every smooth function in $\R$ is the derivative of a smooth function.

In order to get $\X(\R ^2) ^{\partial_\theta}$, for a given 
$X\in \X(\R ^2)$ we consider its restriction to 
$\R^2\setminus (0,0)$, denoted by $X|$. Using polar coordinates,
  $X|=a(r,\theta)\partial_\theta+b(r,\theta)\partial_r$, then
 \[
 0=[\partial_\theta,X]
 =[\partial_\theta,a\partial_\theta+b\partial_r]=
 \]
 \[
 =a_\theta\partial_\theta
 +b_\theta\partial_r
 \]
and we get that 
 $a=a(r)$ and $b=b(r)$ are functions not depending on $\theta$,
they depend only on $r$:
\[
X|=a(r)\partial_\theta+b(r)\partial_r
\]
We claim that 
\[
a(r)=\wt a (r ^2)\ \hbox{ and } b(r)=r\wt b(r ^2)
\]
for some smooth functions $\wt a$ and $\wt b$.
Recall $r\partial_r=x\partial_x+y\partial_y\in\X(\R ^2)$
and $\partial_\theta=-y\partial_x+x\partial_y$, so
\[
\big(a(r)\partial_\theta+b(r)\partial_r\big)(x)=
-ya(r)+b(r)\frac{x}{r}\]
\[
\big(a(r)\partial_\theta+b(r)\partial_r\big)(y)=
xa(r)+b(r)\frac{y}{r}\]
If we write $X=f(x,y)\partial_x+g(x,y)\partial_y$ we get
\[f=-ya(r)+b(r)\frac{x}{r}
\]
\[g=xa(r)+b(r)\frac{y}{r}\]
So
\[
xf(x,y)+yg(x,y)=rb(r)\To xf(x,0)=|x|b(|x|)=-xf(-x)\]
Defining $h(x):=f(x,0)$ is a smooth odd function,
so $h(x)=x\wt b (x^2)$ for some smooth function $\wt b$. 

Similarly,
\[
-yf(x,y)+xg(x,y)=r ^2a(r)\]
So, for $x>0$ we get
\[
xg(x,0)=x^2a(x) = -xg(-x,0)
\]
which implies
\[
g(x,0)=xa(x) = -g(-x,0)
\]
Now define $h(x):=g(x,0)$ is (smooth and) odd, then
$h(x)=x\wt a(x ^2)$ for some smooth function $\wt a$.
That is
\[
g(x,0)=xa(x)=x\wt a(x^2)
\]
and we conclude  $a(r)=\wt a(r ^2)$ with
$\wt a\in C^\infty(\R)$.
 
 Notice $D_r:=r\partial_r=x\partial_x+y\partial_y$ is a 
 smooth vector field in $\R ^2$.

 We conclude that the invariant vector fields are of  the form
 \[
\X(\R ^2) ^{\partial_\theta}=
 \Big\{
 a(r ^2)\partial_\theta+b(r ^2)D_r:
 a,b\in C ^\infty(\R)\Big\}
 \]
 Let us compute the differential:
 \[
 \dpi(a(r ^2)\partial_\theta+b(r ^2)D_r)
 =
 \Big[\Pi,a(r ^2)\partial_\theta+b(r ^2)D_r\Big]
 =
 \Big[\partial_\theta\wedge D_r,a(r ^2)\partial_\theta+b(r ^2)D_r\Big]
 \]
 \[
=D_r(b(r ^2))\partial_\theta\wedge D_r 
=2r ^2b'(r ^2)\partial_\theta\wedge D_r 
\]
So, being zero means $b$ is constant, so the kernel 
of the differential is given by
\[
\Ker\dpi=\Big\{ a(r ^2)\partial_\theta+\lambda D_r:
a\in C ^\infty(\R), \ \lambda\in \R\Big\}
\]
Also, using that every smooth function in $\R$ is a derivative, we get that the image of the differential is given by
\[
\Im\dpi =
\Big\{ r ^2h(r ^2)\partial_\theta\wedge D_r :h\in C ^\infty(\R)
\}
\]

  Now we can compute the cohomology in degree one:
 \[
 H ^1(\R ^2,\dpi)=\frac{
  \Big\{
 a(r ^2)\partial_\theta+\lambda D_r:
a\in C ^\infty(\R), \ \lambda\in \R\Big\}
 }
 {
 \Big\{r^2h(r^2)\partial_\theta: h\in C ^{\infty}(\R)
\Big\} 
 }
 \]
 \[
 =
 \frac{\Big\{a(r ^2):a\in C ^\infty(\R)\Big\}}
 {\Big\{r^2h(r^2)\partial_\theta: h\in C ^{\infty}(\R)\Big\}}
 \partial_\theta \oplus \R D_r\]
 \[
=\R \partial_\theta \oplus \R D_r
 \]
 It remains to compute the invariant bivectors.
  For $\X ^2$, any bivector is a multiple of the volume form
 \[
 \partial_x\wedge\partial_y
 =\frac 1r\partial_\theta\wedge\partial_r
 =\frac 1{r ^2}\partial_\theta\wedge D_r
 \]
 So, we consider  a function $f(x,y)$ and its restriction 
 to $\R ^2\setminus (0,0)$:  $f(x,y)|=a(r ,\theta)$ and compute
 \[
 \Big[ \partial_\theta,\frac{a}r\partial_\theta\wedge\partial_r\Big]
 =
\frac{a_\theta}r\partial _\theta\wedge\partial_r
=0\]
Again from $a_\theta=0$ we get $a=a(r)=f(x,y)$ 
depends only on $r$, and the same argument used before
gives $f(x,y)=g(r ^2)$. We get
\[
(\X ^2(\R ^2)) ^{\partial_\theta}=
\Big\{
\frac{g(r ^2)}{r ^2}\partial_\theta\wedge D_r:
g\in C ^\infty(\R)
\Big\}
\]
and so, the cohomology in degree 2 is given by
\[
H ^2(\R ^2,\dpi)=\frac{
\Big\{
g(r ^2)\frac{\partial_\theta\wedge D_r}{r ^2}:
g\in C ^\infty(\R)
\Big\}
}{
\Big\{ r ^4h(r ^2)\frac{\partial_\theta\wedge D_r}{r ^2} :h\in C ^\infty(\R)
\Big\}
}
\]
\[=
\R \tfrac{\partial_\theta\wedge D_r}{r ^2}
\oplus \R r ^2\frac{\partial_\theta\wedge D_r}{r ^2}
\]
\[
=
\R \Vol\oplus \R\Pi
\]

\end{proof}

\end{document}